\documentclass[a4paper,10pt]{amsart}
\usepackage[polish,english]{babel}
\usepackage[utf8]{inputenc}
\usepackage[T1]{fontenc}
\usepackage{polski}
\usepackage{multicol,breqn}
\usepackage{amsfonts, amsthm,wasysym}
\usepackage{enumerate}
\usepackage{aliascnt}  
\usepackage[pdftex,bookmarks,colorlinks]{hyperref}
\bibliographystyle{alpha}

\newtheorem{theorem}{Theorem}[section]

\newaliascnt{lemma}{theorem}  
\newtheorem{lemma}[lemma]{Lemma}  
\aliascntresetthe{lemma}

\newaliascnt{fact}{theorem}  
\newtheorem{fact}[fact]{Fact}  
\aliascntresetthe{fact}

\newaliascnt{proposition}{theorem}  
\newtheorem{proposition}[proposition]{Proposition}  
\aliascntresetthe{proposition}

\newaliascnt{corollary}{theorem}  
\newtheorem{corollary}[corollary]{Corollary}  
\aliascntresetthe{corollary}

\newtheorem{thm}{Theorem}[part]

\newaliascnt{appcor}{thm}  
\newtheorem{appcor}[appcor]{Corollary}  
\aliascntresetthe{appcor}

\theoremstyle{remark}\newtheorem*{rmk}{Remark}
\theoremstyle{definition}\newtheorem*{df}{Definition}

\newaliascnt{example}{part}
\theoremstyle{remark}\newtheorem{example}[example]{Example}
\aliascntresetthe{example}

\newcommand{\CSND}{\textbf{(CSND)}}
\newcommand{\CND}{\textbf{(CND)}}
\newcommand{\PD}{\textbf{(PD)}}
\newcommand{\SPD}{\textbf{(SPD)}}
\newcommand{\ls}{\langle}
\newcommand{\rs}{\rangle}

\newcommand{\dd}{\mathrm{d}}
\newcommand{\bigslant}[2]{{\raisebox{.25em}{$#1\!$}\left/\raisebox{-.25em}{$\!#2$}\right.}}

\DeclareMathOperator*{\fp}{\hexstar}

\author{Paweł Józiak}
\address{Institute of Mathematics of the Polish Academy of Sciences, Śniadeckich 8, 00-656 Warszawa, Poland}
\email{pjoziak@impan.pl}
\thanks{This paper was financed by the Polish National Science Center grant No. 2012/05/B/ST1/00626.}
\keywords{Conditionally negative definite kernels, conditionally strictly negative definite kernels, Coxeter groups, embedding into Hilbert spaces}
\subjclass[2010]{Primary: 30L05, 46B85; Secondary: 05C05, 05C12, 15B48, 20F55, 43A35}

\title{Conditionally strictly negative definite kernels}
\begin{document}
\selectlanguage{english}

\begin{abstract}
In this note we refine the notion of conditionally negative definite kernels to the notion of conditionally strictly negative definite kernels and study its properties. We show that the class of these kernels carries some surprising rigidity, in particular, the word metric function on Coxeter groups is conditionally strictly negative definite if and only if the group is a free product of a number of copies of $\bigslant{\mathbb{Z}}{2\mathbb{Z}}$'s and that the class of conditionally strictly negative definite kernels on a finite set is a one-parameter perturbation of the class of strictly positive definite kernels on this set. We also discuss several examples.
\end{abstract}
\maketitle

\section{Introduction}

The study of positive and negative (conditionally) definite kernels goes back to early results of A. Kolmogorov, I. Schoenberg and others. This simple notion has found several significant applications, for instance in proving the Haagerup approximation properties for several important classes of groups (the free groups, as shown by U. Haagerup in \cite{Ha}, Coxeter groups, as shown by M. Bożejko, T. Januszkiewicz and R. Spatzier in \cite{BJS} or groups acting on $\mathsf{CAT}(0)$ cube complexes, as shown by G. Niblo and L. Reeves in \cite{NR}), and it is typically a highly non-trivial result to show that a given kernel is of this type (for instance, the fact that the metric of the hyperbolic space $\mathbb{H}^n$ is conditionally negative definite required the effort done in \cite{FH} by J. Faraut and K. Harzallah). There are several equivalent definitions of a conditionally negative definite kernel (we will recall some of the definitions in \autoref{sec:2.1}). One of them is described by certain inequality. 
In this note 
we propose a slight strengthening of this notion, called conditional strict negative definiteness, requiring that the aforementioned inequality is strict, and study properties of such kernels (we state the precise formulation in the beginning of consecutive section). In particular, we describe these kernels in terms of quadratic embeddings into Hilbert spaces (as Schoenberg did for conditionally negative definite kernels in \cite{Sc1}). It turns out that the behavior of such kernels is very rigid, which is expressed e.g. in the following theorem.
\begin{theorem}\label{theorem:1}
 Let $X$ be a finite set and let $K\colon X\times X\to\mathbb{R}$ be such that $K(y,x)=K(x,y)\geq0$ and $K(x,x)=0$. If $K$ is conditionally stricly negative definite, then there exist a strictly positive definite kernel $A$ and a constant $c>0$ such that $K(x,y)=-A(x,y)+c$.
\end{theorem}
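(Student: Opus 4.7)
The plan is to produce the decomposition by choosing a single constant $c>0$ and setting $A(x,y) := c - K(x,y)$. Since $X$ is finite, identifying $K$ with a symmetric $|X|\times|X|$ matrix (with zero diagonal and nonnegative entries), the task reduces to finding $c>0$ such that $M_c := cJ - K$ is strictly positive definite, where $J = \mathbf{1}\mathbf{1}^T$ is the all-ones matrix. The \CSND\ hypothesis is exactly the statement that $-K$ is strictly positive definite on the codimension-one subspace $\mathbf{1}^\perp$, while $cJ$ is rank-one positive semidefinite, positive on $\mathbb{R}\mathbf{1}$ and vanishing on $\mathbf{1}^\perp$. The guiding intuition is that, for $c$ large enough, these two forms, living on complementary subspaces, should combine into something strictly positive on all of $\mathbb{R}^X$.

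To execute this, I would decompose an arbitrary $v\in\mathbb{R}^X$ as $v = t\mathbf{1} + w$ with $w\in\mathbf{1}^\perp$, and expand
\[
 v^T M_c v = (cn^2 - \alpha)t^2 - 2t\,\langle K\mathbf{1}, w\rangle - w^T K w,
\]
where $n=|X|$ and $\alpha = \mathbf{1}^T K\mathbf{1}$. By \CSND\ and compactness of the unit sphere in $\mathbf{1}^\perp$, there exists $\mu>0$ with $-w^T K w \geq \mu\|w\|^2$ for every $w\in\mathbf{1}^\perp$. Applying Cauchy--Schwarz to the cross term yields the lower bound
\[
 v^T M_c v \geq (cn^2 - \alpha)t^2 - 2|t|\cdot\|K\mathbf{1}\|\cdot\|w\| + \mu\|w\|^2,
\]
a quadratic form in the two nonnegative variables $|t|$ and $\|w\|$.

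The problem thereby reduces to ensuring this $2\times 2$ quadratic form is positive definite, which is equivalent to $cn^2 > \alpha$ together with $(cn^2 - \alpha)\mu > \|K\mathbf{1}\|^2$; both are achieved by taking $c$ sufficiently large. Positivity of $c$ comes for free: since $K\not\equiv 0$ (otherwise \CSND\ fails as soon as $|X|\geq 2$) and $K\geq 0$, we have $\alpha>0$, which forces $c > \alpha/n^2 > 0$. The case $|X|\leq 1$ is trivial.

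The only real obstacle is that $\mathbf{1}^\perp$ is not in general invariant under $K$, so $J$ and $K$ cannot be simultaneously diagonalized, and the cross term $\langle K\mathbf{1}, w\rangle$ cannot be made to vanish. It must instead be absorbed by enlarging $c$, which is precisely what the $2\times 2$ Sylvester-type inequality above quantifies. Once $M_c$ is shown strictly positive definite, the kernel $A(x,y) := c - K(x,y)$ is strictly positive definite on $X$ and satisfies $K = -A + c$, as required.
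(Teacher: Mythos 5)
Your proof is correct, and it takes a genuinely different route from the paper. The paper works geometrically: it invokes the quadratic embedding $\alpha\colon X\to\mathcal{H}$ with $K(x,y)=\|\alpha(x)-\alpha(y)\|^2$, uses the characterization of \CSND\ via affine independence of the image (Lemma 2.7) together with the circumscribed-sphere condition (Proposition 2.4(d)) to translate the picture so that all $\|\alpha(x)\|=r$, and then reads off $K(x,y)=2r^2-2\ls\alpha(x)|\alpha(y)\rs$ with the inner-product kernel strictly positive definite once $0$ is moved off the affine span; the constant produced is the geometrically meaningful $c=2r^2$ (twice the squared circumradius, possibly after a further orthogonal translation). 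You instead argue entirely in matrix terms: you must show $cJ-K$ is positive definite for large $c$, and since $\mathbf{1}^\perp$ is not $K$-invariant you cannot simultaneously diagonalize, so you control the cross term $\ls K\mathbf{1},w\rs$ by Cauchy--Schwarz and reduce to a $2\times2$ determinant condition $(cn^2-\alpha)\mu>\|K\mathbf{1}\|^2$, where $\mu>0$ exists by \CSND\ plus compactness of the unit sphere of $\mathbf{1}^\perp$. Your treatment of the edge cases ($K\not\equiv0$ forcing $\alpha>0$ and hence $c>0$; the trivial case $|X|\le1$) is sound. What each approach buys: yours is more elementary (no appeal to Schoenberg's embedding theorem or the sphere characterization) and shows that \emph{every} sufficiently large $c$ works, whereas the paper's argument exhibits a canonical constant tied to the geometry of the embedded point configuration and fits into the framework the rest of the paper is built on. Both are valid proofs of the stated existence claim.
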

The above theorem shows that relaxing the assumption from being strictly positive definite to being \emph{conditionally} strictly negative definite in this case gives only a one-parameter perturbation of the \emph{a priori} much smaller family of kernels. Let us note that without the requirement of being conditionally \emph{stricly} negative definite this kind of theorem is not true (i.e. the class of positive definite kernels can be deformed in a much more complicated way, as it will be clear from examples included in \autoref{sec:2.1}).

Another point of view justifying the rigidity of the class is as follows. We consider the case of kernels defined by a function on a group (Toeplitz type kernels), the length functions are of greatest interest and appeared numerous times in the literature. The following hold:
\begin{theorem}\label{theorem:2}
 The word metric on a Coxeter group $\Gamma=\ls S|R\rs$ is conditionally strictly negative definite if and only if $\Gamma$ is a free Coxeter group (i.e. $\Gamma=\fp\limits_{i=1}^n\bigslant{\mathbb{Z}}{2\mathbb{Z}}$). The word metric on an Artin group $\Gamma=\ls S|R\rs$ is conditionally strictly negative definite if and only if $\Gamma$ is a free group (i.e. $\Gamma=\fp\limits_{i=1}^n\mathbb{Z}$). 
\end{theorem}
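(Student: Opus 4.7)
The plan is to handle both halves of the theorem in parallel, since the geometric mechanism is common to both.

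For the \emph{if} directions, the Cayley graph of $\fp_{i=1}^{n}\mathbb{Z}/2\mathbb{Z}$ is a tree (reduced words in the involutory generators are unique), as is the Cayley graph of $F_{n}=\fp_{i=1}^{n}\mathbb{Z}$. It thus suffices to prove that the graph metric $d$ on any tree $T$ is conditionally strictly negative definite. I would use the standard quadratic embedding $\phi\colon V(T)\to\ell^{2}(E(T))$, $\phi(v)=\sum_{e\in\gamma(v_{0},v)}\chi_{e}$, which satisfies $\|\phi(u)-\phi(v)\|^{2}=d(u,v)$. By the Hilbert-space characterization of CSND kernels from \autoref{sec:2.1}, strictness for distinct $x_{1},\ldots,x_{n}$ reduces to affine independence of $\phi(x_{1}),\ldots,\phi(x_{n})$. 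I would establish this by a leaf-peeling induction on the subtree $T'$ spanned by $\{x_{1},\ldots,x_{n}\}$: for a leaf $x_{\ell}$ of $T'$ and its incident edge $e$, whichever side of $T\setminus\{e\}$ contains $v_{0}$, the coefficient of $\chi_{e}$ in $\sum c_{i}\phi(x_{i})$ equals $\pm c_{\ell}$ after using $\sum c_{i}=0$, so $c_{\ell}=0$ and the induction proceeds.

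For the Coxeter \emph{only if} direction, if $\Gamma$ is not free Coxeter then some $m_{ij}<\infty$. The standard parabolic $\langle s_{i},s_{j}\rangle\leq\Gamma$ is isomorphic to $D_{m_{ij}}$ and embeds isometrically by Tits' classical theorem; the Cayley graph of $D_{m}$ in its Coxeter generators is the cycle $C_{2m}$. It thus suffices to produce, for each $m\geq 2$, a non-zero null vector of zero coordinate sum for the graph-distance matrix of $C_{2m}$. This is a direct circulant computation: the eigenvalues are $\lambda_{k}=m(-1)^{k}+2\sum_{j=1}^{m-1}j\cos(\pi jk/m)$, and the identity $\sum_{j=1}^{m-1}j\cos(2\pi j/m)=-m/2$ (obtained by differentiating $\sum z^{j}$ at $z=e^{2\pi i/m}$) yields $\lambda_{2}=0$, with eigenvector $(\zeta^{j})_{j=0}^{2m-1}$ for $\zeta=e^{2\pi i/m}$ of zero coordinate sum.

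For the Artin \emph{only if} direction, again some $m_{ij}<\infty$, and I would single out the $2m_{ij}$ vertices $v_{0}=e$, $v_{k}=s_{i}s_{j}s_{i}\cdots$ (alternating, length $k\leq m_{ij}$), $u_{k}=s_{j}s_{i}s_{j}\cdots$ (length $k\leq m_{ij}-1$); the Artin relation closes them into a combinatorial cycle of length $2m_{ij}$ in the Cayley graph of $\Gamma$. The inequality $d_{\Gamma}\leq d_{C_{2m_{ij}}}$ on these vertices is automatic from this cycle. For the matching lower bound---and to avoid invoking isometric embedding of sub-Artin parabolics, which is delicate in general---I would pass to the associated Coxeter quotient $\pi\colon\Gamma\to\Gamma_{\mathrm{Cox}}$, under which word length decreases letter-by-letter. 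The images $\pi(v_{k}),\pi(u_{k})$ exhaust the $2m_{ij}$ elements of the dihedral parabolic $D_{m_{ij}}\leq\Gamma_{\mathrm{Cox}}$, and the Coxeter case above shows their pairwise distances there are the cycle distances; combining these two bounds gives $d_{\Gamma}(v_{r},v_{s})=d_{C_{2m_{ij}}}(r,s)$, so the circulant zero-eigenvector produces the desired configuration in $\Gamma$. The main delicate point I foresee is precisely this Coxeter-quotient detour, which replaces the potentially subtle isometric-embedding question for Artin parabolics by the well-established Coxeter analogue; the tree and circulant steps are otherwise routine once set up.
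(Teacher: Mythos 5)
Your proposal is correct, and its skeleton --- trees for the ``if'' direction, even cycles inside the Cayley graph for the ``only if'' direction, both fed through the affine-independence/invertibility criterion of \autoref{lemma:keylem} --- coincides with the paper's; the implementation, however, differs at three points. (1) For trees the paper does not embed directly: it first shows that graphs on at most three vertices are \CSND\ and that the Markov sum and free product of graphs preserve \CSND\ (\autoref{lemma:ms}), obtaining regular trees as Cayley graphs of free products and arbitrary trees by isometric embedding into regular ones; your explicit $\ell^2(E(T))$ embedding with leaf-peeling is a correct direct substitute resting on the same \autoref{lemma:keylem}. (2) For the obstruction the paper avoids the full circulant spectrum of $C_{2m}$: \autoref{lemma:obst} extracts four vertices forming two antipodal pairs and exhibits $(1,-1,1,-1)^\top$ in the kernel of the resulting $4\times4$ distance matrix; your computation $\lambda_2=0$ is correct (for $m=2$ it specializes to exactly that vector) but does more work than needed. (3) The most substantive difference is how the cycle is seen to be isometrically embedded: the paper treats Coxeter and Artin groups uniformly by choosing $m_{s,t}$ minimal and asserting isometric embedding ``due to minimality'', whereas you use the parabolic subgroup theorem in the Coxeter case and, in the Artin case, the sandwich $d_{C_{2m_{s,t}}}\geq d_{\Gamma}\geq d_{\Gamma_{\mathrm{Cox}}}=d_{C_{2m_{s,t}}}$ obtained from the length-nonincreasing Coxeter quotient. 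This last step is more careful than the paper's one-line justification and is a genuine improvement for the Artin half, where isometric embedding of dihedral parabolics is indeed delicate; what the paper's version buys in exchange is brevity and a single uniform argument for both families.
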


Note that while for general Coxeter groups the metric actually is conditionally negative definite (this was in fact the key result of \cite{BJS}), the same question for general Artin groups remains unsolved. Our result may thus be regarded as a specialization of the result from \cite{BJS} on one hand, and a statement about the length on general Artin groups on the other: even if their word metric was conditionally negative definite, it surely is not conditionally strictly negative definite, apart from the trivial example described above.

The question of examples of kernels satisfying the assumptions is investigated further in the paper, as well as the question about their permanence properties. We discuss mainly discrete metric spaces as examples (as in \autoref{theorem:2}). We show that the class of conditionally strictly negative definite kernels is closed under pointed sum, and hence closed under comb and star product of graphs. The key observation that leads to those results is based on a simple geometric description of these kernels (see \autoref{sec:2.3}).

The note is organized as follows: in \autoref{sec:2.1} we give all necessery definitions and recall some facts about kernels. In \autoref{sec:2.2} we briefly discuss some facts from linear algebra needful to clarify the notation. The key ingredients in our note are contained in \autoref{sec:2.3}, where we prove a lemma on quadratic embeddings of conditionally strictly negative definite kernels, and in \autoref{sec:2.4}, where we formulate and prove a useful construction lemma. Most of our examples rely on these two lemmas. The \autoref{sec:2.5} is devoted to the discussion of an obstruction to being a strictly negative definite kernel and we exploit it in \autoref{sec:3} to describe some graph-theoretic examples and non-examples, in particular, we give the proof of \autoref{theorem:2} and \autoref{theorem:1}. We conclude the note with an appendix not directly related to the arguments discussed in \autoref{sec:2}, giving a simple proof of Schoenberg's result showing that Euclidean metric is a conditionally 
strictly negative definite kernel and discuss a continuous analogue of \autoref{theorem:2}.

\vskip 1em\textbf{Acknowledgements.} Most of these results were included in the author's MSc thesis at University of Wrocław. We would like to thank Marek Bożejko for being a very patient advisor, inquiring interlocutor and benevolent friend; in particular: for directing the author towards the questions studied in this note. We would also like to thank Piotr Sołtan for his impact on the shape of this manuscript.

\section{Preliminaries}\label{sec:2}
\subsection{Definitions and generalities on kernels}\label{sec:2.1}
Let $c_c(X)$ denote the set of all complex-valued functions on a set $X$ with finite supports.
\begin{df}
A map $K\colon X\times X\to\mathbb{C}$ is called a \emph{kernel} on $X$. We say that a kernel $K$ is \emph{hermitian}, if $K(x,y)=\overline{K(y,x)}$. A hermitian kernel $K$ will be called \emph{Schoenberg kernel} if $K(x,y)=K(y,x)\geq0$ and $K(x,x)=0$. We say that a hermitian kernel $K$ is \emph{(strictly) positive definite}, if 
$$\forall\lambda\in c_c(X)\setminus\{0\}\sum\limits_{x,y\in V}\lambda(x)\overline{\lambda(y)}K(x,y)\geq0\quad(>0)$$
We say that $K$ is \emph{conditionally (strictly) negative definite}, if
$$\forall\lambda\in c_c(X)\setminus\{0\}\sum\limits_{x,y\in V} \lambda(x)\overline{\lambda(y)}K(x,y)\leq0\quad(<0)\quad\textrm{provided that }\sum\limits_{x\in X}\lambda(x)=0.$$
\end{df}
\noindent One can also introduce analogous notions of (strictly) negative definite kernels and conditionally (strictly) positive definite kernels, but they amount to a change of sign of $K$. We will follow the convention that the kernels that are conditionally definite are assumed to be conditionally negative definite, the kernels that are unconditionally definite are assumed to be positive definite. Notice that in literature the term ``conditionally'' is often omitted. Let us also denote the classes of such kernels by acronyms of their names in brackets, i.e. \CND, \CSND\, \PD, \SPD.

Let us notice that \SPD\ kernels appeared several times in the literature (\cite{CFS, CMS, Pin1, Pin2, Sun} and references therein), whereas \CSND\ kernels have not attracted enough attention so far, to our best knowledge. Let us also notice that, via Schur product and exponential function, pointwise exponential of a \CND\ kernel is a \PD\ kernel (\cite{Sc1}) and pointwise exponential of an \CSND\ kernel will be a \SPD\ kernel, altough the assumption is not necesary (i.e. one may find a \CND, non-\CSND, kernel whose pointwise exponential is \SPD, see \cite{Bo2}).
\begin{rmk}
Observe that for a hermitian kernel $K$ (which will be the assumption throughout the present article), it is enough to consider $\lambda\colon X\to\mathbb{R}$, since the imaginary part will always vanish.
\end{rmk}
\begin{example}\label{ex:0}
 Let $X$ be a set, let $d:X\times X\to\mathbb{R}$ be a pseudometric on $X$. Then $d$ is a Schoenberg kernel.
\end{example}
\begin{example}\label{ex:1}
Let $\mathcal{H}$ be a Hilbert space equipped with an inner product $\ls\cdot|\cdot\rs$, let $\alpha\colon X\to\mathcal{H}$ be any mapping. Then the kernel $K(x,y)=\ls\alpha(x)|\alpha(y)\rs$ is \PD. If moreover $(\alpha(x))_{x\in X}$ are linearly independent, the kernel $K$ is also \SPD.
\end{example}
\begin{rmk}
It is also a well-known result (attributed to A. Kolmogorov or being the essence of the famous Gelfand-Naimark-Segal construction) that for any abstract \PD\ kernel one can find a Hilbert space and a map $\alpha\colon X\to\mathcal{H}$ as in \autoref{ex:1}. 
\end{rmk}
\begin{example}\label{ex:2}
Let $A$ be any \PD\ kernel on $X$ and let $F\colon X\to\mathbb{C}$ be any function. Then $K(x,y)=-A(x,y)+F(x)+\overline{F(y)}$ is \CND. In fact, for any \CND\ kernel $K$ one can find a \PD\ kernel $A$ and a function $F$ so that the previous formula hold. 
\end{example}
\begin{example}\label{ex:3}
Following the setting from \autoref{ex:2}, if $A$ is \SPD, then $K$ is \CSND. The converse also holds in the following sense. Given a \CSND\ kernel $K$ defined on $X$, one can find a set $X'\supset X$ (in fact $X'\setminus X$ is just a single point), a kernel $A$ defined on $X'$ and a function $F\colon X'\to\mathbb{C}$ such that the restriction $A|_{X \times X}$ is a \SPD\ kernel and the equality above holds for all $x,y\in X$. The proof is just a slight modification of the argument showing the last assertion from \autoref{ex:2} 
\end{example}
\begin{rmk}
 One can regard \autoref{theorem:1} as a refinement of the converse construction from \autoref{ex:3}, for Schoenberg kernels on a finite set $X$: it turns out that the function $F$ can be chosen to be a constant function.
\end{rmk}
The \CND\ kernels we now call Schoenberg kernels were characterized by I. Schoenberg in the following theorem:
\begin{theorem}[\cite{Sc1}]\label{theorem:Sch1}
Let $K\colon X\times X\to\mathbb{R}$ be a Schoenberg kernel. If the kernel $K$ is \CND, then there exist a real Hilbert space $\mathcal{H}$ and a mapping $\alpha\colon X\to\mathcal{H}$ such that \begin{equation}\label{eq:*}\|\alpha(x)-\alpha(y)\|^2=K(x,y).\end{equation}
\end{theorem}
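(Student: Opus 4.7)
The plan is the classical linearization argument. I fix a basepoint $x_0\in X$ and introduce the auxiliary kernel
$$A(x,y):=\tfrac{1}{2}\bigl(K(x,x_0)+K(y,x_0)-K(x,y)\bigr).$$
If I can show that $A$ is \PD, then the remark after \autoref{ex:1} (Kolmogorov/GNS) produces a real Hilbert space $\mathcal{H}$ and a map $\alpha\colon X\to\mathcal{H}$ with $\langle\alpha(x)|\alpha(y)\rangle=A(x,y)$, and the desired identity \eqref{eq:*} will follow by a one-line expansion: since $K(x,x)=0$, we have $A(x,x)=K(x,x_0)$, so
$$\|\alpha(x)-\alpha(y)\|^2=A(x,x)-2A(x,y)+A(y,y)=K(x,y).$$

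The main step is therefore to verify positive definiteness of $A$. Given $\mu\in c_c(X)$ real-valued, I would set $S:=\sum_{x}\mu(x)$ and define $\lambda:=\mu-S\delta_{x_0}$, which lies in $c_c(X)$ and satisfies $\sum_x\lambda(x)=0$; thus the \CND\ hypothesis on $K$ applies to $\lambda$. Expanding $\sum_{x,y}\lambda(x)\lambda(y)K(x,y)\leq0$ and using $K(x_0,x_0)=0$ together with hermiticity gives the inequality
$$\sum_{x,y}\mu(x)\mu(y)K(x,y)\leq 2S\sum_{x}\mu(x)K(x,x_0).$$
On the other hand, a direct computation (pulling the single-variable terms out of the double sum) yields
$$2\sum_{x,y}\mu(x)\mu(y)A(x,y)=2S\sum_{x}\mu(x)K(x,x_0)-\sum_{x,y}\mu(x)\mu(y)K(x,y),$$
which is nonnegative by the previous line. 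Hence $A$ is \PD.

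The only conceptual obstacle is the passage from $\mu$ (arbitrary finitely supported) to $\lambda$ (with zero sum): this is where the assumption $K(x_0,x_0)=0$ is essential, as it ensures that modifying the $x_0$-coefficient does not introduce an unwanted boundary term. Everything else is bookkeeping. I would also remark that if $X$ happens not to contain a ``basepoint'' one can formally adjoin one, extending $K$ by $K(x_0,x_0)=0$ and $K(x_0,x):=K(x,x_0)$ to any fixed reference value; the Schoenberg hypothesis then makes the construction canonical up to translation in $\mathcal{H}$.
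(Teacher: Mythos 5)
Your proof is correct and is exactly the classical argument behind the result the paper simply cites from \cite{Sc1}: the paper offers no proof of its own, but your choice $F(x)=\tfrac12 K(x,x_0)$ and the kernel $A(x,y)=\tfrac12\bigl(K(x,x_0)+K(y,x_0)-K(x,y)\bigr)$ is precisely the decomposition the paper alludes to in \autoref{ex:2}, followed by the Kolmogorov/GNS realization of $A$. All the computations check out (the final remark about adjoining a basepoint is superfluous, since any nonempty $X$ already contains one), so nothing further is needed.
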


Important examples of kernels satisfying hypothesis of the theorem form a wide class of metric spaces (for which the metric is \CND), as in \autoref{ex:0}. The map $\alpha$ is often called the \emph{quadratic embedding} of a metric space into a Hilbert space (i.e. $d(x,y)=\|\alpha(x)-\alpha(y)\|^2$). We will call the map $\alpha$ a quadratic \emph{embedding} of a pair $(X,K)$, where $K$ is a \CND\ Schoenberg kernel, even if it is not a one-to-one map, whenever this does not lead to misunderstanding. It is clear that any mapping $\alpha\colon X\to\mathcal{H}$ induces a kernel $K\colon X\times X\to\mathbb{R}$ via formula \eqref{eq:*}, and that the kernel $K$ is \CND, so it is a characterization of \CND\ Schoenberg kernels. Observe that if a \CND\ kernel satisfies $K(x,x)\geq0$, one can form a kernel $K'(x,y)=K(x,y)-\delta_{x,y}K(x,x)$, which is a \CND\ Schoenberg kernel, thus the assumptions are not very restrictive. A natural question that we consider in following sections is whether the mapping $\alpha$ 
could have 
any additional properties provided that the kernel is \CSND. It turns out that this means exactly that the image of the quadratic embedding is an affinely independent subset of the Hilbert space. 

\subsection{Affine independence}\label{sec:2.2}
Let $V$ be a vector space over the real number field (generalization to an arbitrary field is straightforward) and fix a sequence of vectors $v_0,\ldots,v_n\in V$. We set
\begin{eqnarray*}
U & = & \{(\lambda_0,\ldots,\lambda_n)\in\mathbb{R}^{n+1}:\sum\limits_{k=0}^n \lambda_kv_k=0\},\\
W_n & = & \textrm{aff span}\{ v_0,\ldots,v_n\}=\biggl\{\sum\limits_{k=0}^n\lambda_kv_k:\sum\limits_{k=0}^n\lambda_k=1\biggr\},\\
\mathbb{R}^{n+1}_0 & = & \{(\lambda_0,\ldots,\lambda_n)\in\mathbb{R}^{n+1}:\sum\limits_{k=0}^n\lambda_k=0\}.
\end{eqnarray*}
It is clear that $U$ and $\mathbb{R}^{n+1}_0$ are vector spaces, while $W_n$ need not be, although there exist a vector $w\in W_n$ such that $W_n-w$ is a vector subspace of $V$ (unique if we also ask of $w$ to be of smallest norm among vectors satisfying this property). Define the affine dimension of $W_n$ as the linear dimension of $W_n-w$. We have the following
\begin{proposition}\label{lemma:affind}
 The following conditions are equivalent:
\begin{enumerate}[(a)]
 \item $\dim W_n = n$,
 \item The system of vectors $v_1-v_0,\ldots,v_n-v_0$ is linearly independent.
 \item $\mathbb{R}^{n+1}_0 \cap U = \{0\}$ (in particular, $U$ is of dimension at most one),
 \item there exists a unique $(n-1)$-dimensional sphere $S\subseteq V$ containing all the $v_j$'s.
\end{enumerate}\end{proposition}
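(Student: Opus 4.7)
The plan is to establish (a) $\Leftrightarrow$ (b), then (b) $\Leftrightarrow$ (c), and finally the geometric equivalence (a) $\Leftrightarrow$ (d). The first two equivalences are a direct unwinding of the definitions; the last one is where the linear algebra meets geometry.

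For (a) $\Leftrightarrow$ (b) I would simply invoke translation invariance of affine dimension: the affine dimension of $W_n$ equals the linear dimension of $W_n-v_0$, and $W_n-v_0$ is the \emph{linear} span of $\{v_1-v_0,\ldots,v_n-v_0\}$. Hence it has dimension $n$ iff these $n$ vectors are linearly independent. For (b) $\Leftrightarrow$ (c), note that for $(\lambda_0,\ldots,\lambda_n)\in\mathbb{R}^{n+1}_0$ one has $\lambda_0=-\sum_{k=1}^n\lambda_k$, so the membership condition in $U$ becomes
\[
\sum_{k=0}^n\lambda_k v_k=\sum_{k=1}^n\lambda_k(v_k-v_0)=0.
\]
Therefore $U\cap\mathbb{R}^{n+1}_0=\{0\}$ iff the only linear dependence among $\{v_k-v_0\}_{k=1}^n$ is trivial, which is (b). The ``in particular $\dim U\leq 1$'' remark then follows because $\mathbb{R}^{n+1}_0$ is a hyperplane in $\mathbb{R}^{n+1}$, so $U$ meets it only at $0$ only if $\dim U\leq 1$.

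For (a) $\Leftrightarrow$ (d) I would work in the inner product structure implicitly present on $V$ (needed to talk about spheres). Assuming (a), any $(n-1)$-sphere containing the $v_j$'s lies in an $n$-dimensional affine subspace $W\supseteq W_n$, and since $\dim W_n=n$ forces $W=W_n$, the ambient of the sphere is uniquely determined. The center $c\in W_n$ is characterized by $\|c-v_j\|^2=\|c-v_0\|^2$ ($j=1,\ldots,n$), equivalently
\[
\langle c-v_0,\,v_j-v_0\rangle=\tfrac{1}{2}\|v_j-v_0\|^2.
\]
Writing $c-v_0=\sum_{k=1}^n\mu_k(v_k-v_0)$ this becomes $G\mu=b$, where $G$ is the Gram matrix of $\{v_k-v_0\}_{k=1}^n$ and $b_j=\tfrac12\|v_j-v_0\|^2$. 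By (b), $G$ is invertible, so $\mu$ (and the radius $r=\|c-v_0\|$) exists and is unique. Conversely, by contrapositive, if (a) fails then $W_n$ has affine dimension strictly less than $n$, so no $n$-dimensional affine subspace in which the prospective sphere lives is determined by the $v_j$'s; one then constructs two different $(n-1)$-spheres by shifting any candidate center along a direction orthogonal to $W_n$ within different $n$-dimensional affine subspaces containing $W_n$, contradicting (d).

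The main nuisance, rather than obstacle, will be phrasing the (d) $\Rightarrow$ (a) direction so that it handles cleanly the possibility that no sphere exists at all (when $\dim V$ is too small relative to the required $n$) versus the possibility of many spheres; the uniform way out is the Gram-matrix viewpoint, under which existence-and-uniqueness of the center is literally invertibility of $G$, which is (b). Everything else is bookkeeping once the link $G$ invertible $\Leftrightarrow$ $\{v_k-v_0\}$ linearly independent is in place.
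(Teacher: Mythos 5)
Your proposal is correct, but for the only part the paper actually proves -- the equivalence involving \emph{(d)} -- you take a genuinely different route. The paper dismisses \emph{(a)}$\iff$\emph{(b)}$\iff$\emph{(c)} as well known (your arguments for these match the standard ones: translation by $v_0$ and the substitution $\lambda_0=-\sum_{k\geq1}\lambda_k$) and proves \emph{(b)}$\iff$\emph{(d)} by induction on $n$: it produces the unique $(n-2)$-sphere through $v_1,\dots,v_{n-1}$ inside $\{v_n-v_0\}^\bot$, then sweeps a one-parameter family of $(n-1)$-spheres whose centers move along the direction $v_n-v_0$ and observes exactly one member passes through $v_n$; the converse moves the center along a line orthogonal to $\mathrm{span}\{v_j-v_0\}$ to produce infinitely many spheres. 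You instead characterize the circumcenter directly by the linear system $G\mu=b$ with $G$ the Gram matrix of $\{v_k-v_0\}$, so that existence and uniqueness of the sphere reduce in one stroke to invertibility of $G$, i.e.\ to \emph{(b)}. Your approach is non-inductive and arguably cleaner, and it makes the failure mode in the converse explicit: when $G$ is singular the system restricted to $W_n$ is either inconsistent (no sphere at all, e.g.\ collinear triples) or underdetermined, and in addition the ambient $n$-dimensional affine subspace of the prospective sphere is no longer forced to be $W_n$; every branch negates \emph{(d)}, so the contrapositive closes. You correctly flag this existence-versus-multiplicity split as the one point needing care -- the paper's sweeping argument only exhibits non-uniqueness and quietly ignores the non-existence branch, which is harmless for the implication but makes your version slightly more complete. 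The one cosmetic point to fix in a final write-up is the phrase ``existence-and-uniqueness of the center is literally invertibility of $G$'': that is literally true only for centers constrained to $W_n$, so state the reduction to $W_n$ (via $W\supseteq W_n$ and a dimension count) before invoking $G$.
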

Let us call a system of vectors $v_0,\ldots,v_n\in V$ satisfying the equivalent conditions of \autoref{lemma:affind} \emph{affinely independent}. While conditions \emph{(a)}, \emph{(b)}, \emph{(c)} are well known, the condition \emph{(d)} seems to be lesser known, so we give a proof for completeness.
\begin{proof}
\emph{(b)}$\iff$\emph{(d)}: It is clear that three points in a plane are either contained in a single line or contained in a single circle. We prove the assertion by induction on dimension: assume \emph{(b)}, i.e.  if $v_1-v_0,\ldots,v_n-v_0$ are linearly independent then so are $v_1-v_0,\ldots,v_{n-1}-v_0$ and by induction there is a unique $(n-2)$-dimensional sphere $S$ in $\{v_n-v_0\}^\bot$ (the orthogonal complement is regarded in standard scalar product) containing $v_1,\ldots,v_{n-1}$: call its center $x$. Consider a family of $(n-1)$-dimensional spheres $S_t$ with centers in $x+t\cdot(v_n-v_0)$ for $t\in\mathbb{R}$ and radii such that $\{v_n-v_0\}^\bot\cap S_t = S$ for all $t\in\mathbb{R}$ -- it is clear that for each $t$ there is only one such radius. Of course only for a single value of $t=\tau$ the sphere $S_\tau$ contains the point $v_n$ and if any other sphere $S'$ also satisfies this assertion, it satisfies also $\{v_n-v_0\}^\bot\cap S' = S$, so it coincides with $S_\tau$. Conversely: given $v_0,
\ldots,v_n$ not satisfying \emph{(b)} and given $(n-1)$-dimensional sphere $S$ satisfying condition \emph{(d)}, we can find in $V$ a line orthogonal to $\mathrm{span }\{ v_1-v_0,\ldots,v_n-v_0\}$ and moving the center of $S$ along this direction in the way described in previous part we get an infinite family of spheres containing $v_1,\ldots,v_n$, so \emph{d} is not satisfied.
\end{proof}
\begin{rmk}
 The assertion \emph{(d)} is purely finite dimensional: it is not true that for a given infinite set of vectors in a Hilbert space, for which all the finite subsets are affinely independent, there exists a sphere  containing  all of them. To see this it is enough to consider the set $\{k\cdot e_k:k\geq1\}$ with $(e_k)_k$ being the standard orthonormal basis of a separable Hilbert space. While for any $N\geq1$ there exist a unique $(N-2)$-dimensional  sphere in $\textrm{aff span}\{ke_k:1\leq k\leq N\}$ containing all the points $ke_k$ for $1\leq k\leq N$, it is easy to check that the radii of the spheres, as well as the norms of their centers, have asymptotic $\mathcal{O}(N^{3/2})$. 
 \end{rmk}
 \noindent An immediate consequence of \autoref{lemma:affind} is the following:
\begin{fact}\label{lemma:fin}
 Let $v_0,\ldots,v_n\in V$ be a sequence of affinely independent vectors (in particular, they are pairwise distinct), let $W=\textrm{aff span}\{ v_0,\ldots,v_n\}$ be their affine span. If $0\notin W$, then $v_1,\ldots,v_n$ are linearly independent.
\end{fact}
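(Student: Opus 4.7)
The plan is to invoke the characterization of affine independence from \autoref{lemma:affind}, specifically condition \emph{(c)}: affine independence of $v_0,\ldots,v_n$ amounts to $U\cap\mathbb{R}^{n+1}_0=\{0\}$. In fact I will prove the slightly stronger statement that the full system $v_0,v_1,\ldots,v_n$ is linearly independent, from which the stated assertion about $v_1,\ldots,v_n$ is immediate by passing to a subset.

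The argument is then a short case split. Suppose $\sum_{k=0}^n\mu_k v_k=0$ and set $s:=\sum_{k=0}^n\mu_k$. If $s=0$, then $(\mu_0,\ldots,\mu_n)\in U\cap\mathbb{R}^{n+1}_0$, which by condition \emph{(c)} is trivial, so every $\mu_k=0$. If $s\neq 0$, divide through by $s$; the resulting identity $\sum_k(\mu_k/s)v_k=0$ has coefficients summing to $1$, exhibiting $0$ as an affine combination of the $v_k$ and hence as an element of $W$, contradicting the hypothesis $0\notin W$.

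There is essentially no obstacle here: the content of the fact sits entirely in \autoref{lemma:affind}. The only real choice is which equivalent formulation to deploy, and \emph{(c)} is the most convenient because it couples the linear relation $\sum\lambda_k v_k=0$ with the affine constraint $\sum\lambda_k=1$ (respectively $=0$), which is exactly the dichotomy driving the case split. One could alternatively argue via \emph{(b)} by noting that $v_0\notin W$ rules out $v_0=0$ and so forth, but this is strictly more cumbersome.
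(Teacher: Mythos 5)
Your argument is correct and is exactly the ``immediate consequence'' of \autoref{lemma:affind} that the paper asserts without writing out: the case split on $s=\sum_k\mu_k$ uses condition \emph{(c)} when $s=0$ and the definition of $W$ when $s\neq0$. Note that you in fact establish the stronger conclusion that all of $v_0,\ldots,v_n$ are linearly independent, which is the form actually invoked later in the proof of \autoref{theorem:1}.
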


\subsection{A lemma on quadratic embedding}\label{sec:2.3}
\begin{lemma}\label{lemma:keylem}
Let $(X,K)$ be a finite set with a \CND\ Schoenberg kernel, let $\alpha\colon V\to\mathcal{H}$ be its quadratic embedding. The following conditions are equivalent:
\begin{enumerate}[(a)]
 \item $0\notin\sigma(\tilde{K})$, where $\tilde{K}: \ell^2(X)\to\ell^2(X)$ is defined as $\tilde{K}\delta_x=\sum\limits_{y\in X}K(x,y)\delta_y$ (in other words, the matrix defined by $K$ is invertible),
 \item The set of vectors $\{ \alpha(x): x\in X\}$ is affinely independent,
 \item $K$ is \CSND.
\end{enumerate}
\end{lemma}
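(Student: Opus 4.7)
The plan is to establish $(b)\Leftrightarrow(c)$ directly via Schoenberg's identity, and $(c)\Leftrightarrow(a)$ via spectral considerations on the real symmetric matrix $\tilde K$. The starting point is the identity obtained by expanding $K(x,y) = \|\alpha(x) - \alpha(y)\|^2$: for every $\lambda\in c_c(X)$ with $\sum_x \lambda(x) = 0$,
\[
\sum_{x,y \in X}\lambda(x)\lambda(y)\,K(x,y) \;=\; -2\Bigl\|\sum_{x\in X}\lambda(x)\alpha(x)\Bigr\|^2.
\]
The right-hand side is nonpositive and vanishes precisely when $\sum_x \lambda(x)\alpha(x) = 0$; hence $K$ is \CSND\ iff no nonzero $\lambda \in V_0 := \{\lambda:\sum\lambda=0\}$ annihilates this vector sum, which by condition $(c)$ of \autoref{lemma:affind} is exactly the affine independence of $\{\alpha(x):x\in X\}$. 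This yields $(b)\Leftrightarrow(c)$.

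For $(c)\Rightarrow(a)$ I would use eigenvalue counting. Assuming \CSND, the quadratic form $\langle\tilde K\cdot,\cdot\rangle$ is strictly negative definite on the $(n-1)$-dimensional hyperplane $V_0 \subset \ell^2(X)$, where $n = |X|$. The Courant--Fischer min-max characterization applied to the symmetric matrix $\tilde K$ forces the second-largest eigenvalue $\mu_2$ to satisfy $\mu_2 \leq \max_{\lambda\in V_0,\,\|\lambda\|=1}\langle\tilde K\lambda,\lambda\rangle < 0$, so $\mu_2,\ldots,\mu_n < 0$. Since $K$ is a Schoenberg kernel, $K(x,x)=0$ for all $x$ and hence $\mathrm{tr}(\tilde K) = 0$, which forces the remaining eigenvalue $\mu_1 > 0$. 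All eigenvalues being nonzero, $0\notin\sigma(\tilde K)$.

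For the converse $(a)\Rightarrow(c)$ I would argue contrapositively. If $K$ is \CND\ but fails \CSND, some nonzero $\lambda_0 \in V_0$ attains the maximum value zero of the negative semi-definite form $\langle\tilde K\cdot,\cdot\rangle|_{V_0}$; the first-order optimality condition then yields $\tilde K\lambda_0 \perp V_0$, so $\tilde K\lambda_0 = c\mathbf 1$ for some $c\in\mathbb{R}$, where $\mathbf 1$ denotes the constant function on $X$. The case $c = 0$ immediately produces $\lambda_0 \in \ker \tilde K\setminus\{0\}$. In the case $c\neq 0$, one has to combine $\lambda_0$ linearly with further elements of the isotropic subspace $\mathcal{N} = \{\lambda\in V_0:\sum_x \lambda(x)\alpha(x) = 0\}$ so as to cancel the $c\mathbf 1$ component, promoting the isotropic vector for $\tilde K|_{V_0}$ into a genuine null vector of $\tilde K$ on $\ell^2(X)$. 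I expect this last step to be the main obstacle: it requires carefully exploiting the trace-zero condition together with the geometric structure of the quadratic embedding to ensure that $\mathbf 1$ cannot lie outside the image of $\tilde K$ acting on $V_0$, so that the $c\neq 0$ scenario really does yield a nontrivial kernel element.
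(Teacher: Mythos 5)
Your equivalence \emph{(b)}$\iff$\emph{(c)} is exactly the paper's argument (the same expansion of $\|\sum_x\lambda(x)\alpha(x)\|^2$ followed by \autoref{lemma:affind}\emph{(c)}), and your proof of \emph{(c)}$\Rightarrow$\emph{(a)} --- strict negativity of the form on the hyperplane $V_0$ forces $\mu_2,\ldots,\mu_n<0$ by Courant--Fischer, and $\mathrm{tr}\,\tilde K=\sum_xK(x,x)=0$ then forces $\mu_1>0$ --- is correct for $\#X\geq2$ and in fact supplies the details that the paper compresses into a bare citation of the min--max principle.

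The obstacle you flag in \emph{(a)}$\Rightarrow$\emph{(c)} is not a technicality you failed to finesse: that implication is false, and your $c\neq0$ scenario is exactly where it dies. Take $X=\{1,2,3\}$ with quadratic embedding $\alpha(i)=i-1\in\mathbb{R}$, i.e.\ the \CND\ Schoenberg kernel
\[
\tilde K=\left[\begin{array}{ccc}0&1&4\\1&0&1\\4&1&0\end{array}\right].
\]
The three points are collinear, so $\lambda_0=(1,-2,1)^\top\in V_0$ gives $\sum_{x,y}\lambda_0(x)\lambda_0(y)K(x,y)=2(-2+4-2)=0$ and $K$ is not \CSND; yet $\det\tilde K=8\neq0$, so \emph{(a)} holds. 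Here $\tilde K\lambda_0=2\cdot\mathbf 1$, i.e.\ $c=\sum_x\lambda_0(x)\|\alpha(x)\|^2=2$, the isotropic subspace $\mathcal N$ is one-dimensional so there is nothing to combine $\lambda_0$ with, and $c$ is invariant under translating $\alpha$ by any $w$ (the correction is $2\ls\sum_x\lambda_0(x)\alpha(x)|w\rs+\|w\|^2\sum_x\lambda_0(x)=0$), so no renormalization rescues the step. In general a failure of \CSND\ only yields $\mathbf 1\in\mathrm{ran}\,\tilde K$, never by itself $0\in\sigma(\tilde K)$. The honest content of the lemma is therefore \emph{(b)}$\iff$\emph{(c)} plus the one-way implication \emph{(c)}$\Rightarrow$\emph{(a)}; the paper's claim that \emph{(a)}$\iff$\emph{(c)} is ``a simple result in linear algebra'' is wrong in precisely the direction you were struggling with (only the contrapositive use of \emph{(c)}$\Rightarrow$\emph{(a)} in \autoref{lemma:obst} is safe; the use of \emph{(a)}$\Rightarrow$\emph{(c)} in \autoref{prop:complete} needs a direct check of the form on $V_0$ instead).
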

\begin{proof}
\emph{(c)}$\iff$\emph{(a)}: this is a simple result in linear algebra or a corollary of Courant-Fischer-Weyl min-max principle , cf. \cite[Theorem~XIII.1]{SR}. \vskip 1em
\emph{(c)}$\iff$\emph{(b)}: since 
$K(x,y)=\|\alpha(x)-\alpha(y)\|^2 = \|\alpha(x)\|^2+\|\alpha(y)\|^2 -2\ls\alpha(x)\big|\alpha(y)\rs$, we have:
\begin{multline}\notag
 2\|\sum\limits_{x\in X} \lambda(x)\alpha(x)\|^2=2\sum\limits_{x,y\in X}\lambda(x)\lambda(y)\big\ls\alpha(x)\big|\alpha(y)\big\rs\\
 =\sum\limits_{x,y\in X}\lambda(x)\lambda(y)\bigg(\|\alpha(x)\|^2+\|\alpha(y)\|^2-K(x,y)\bigg)\\
 =\bigg(\sum\limits_{x\in X}\lambda(x)\bigg)\bigg(\sum\limits_{x\in X} \lambda(x)\|\alpha(x)\|^2\bigg) + \bigg(\sum\limits_{x\in X}\lambda(x)\bigg)\bigg(\sum\limits_{x\in X} \lambda(x)\|\alpha(x)\|^2\bigg) \\
 -\sum\limits_{x,y\in X}\lambda(x)\lambda(y)K(x,y),
\end{multline}
so for any $\lambda\colon X\to\mathbb{R}$ satisfying $\sum\limits_{x\in X}\lambda(x)=0$ we have \[\sum\limits_{x,y\in X}\lambda(x)\lambda(y)K(x,y)\leq0.\] Thus, by \autoref{lemma:affind}\emph{(c)}, $K$ is \CSND\ if and only if the vectors $\{\alpha(x):x\in X\}$ are affinely independent.
\end{proof}
\begin{corollary}
 The minimal dimension of the Hilbert space $\mathcal{H}$ admitting the quadratic embedding of a \CSND\ kernel on a set $X$ is equal to $\# X-1$.
\end{corollary}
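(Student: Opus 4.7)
The plan is to deduce both the lower and the upper bound directly from the equivalence \emph{(b)}$\iff$\emph{(c)} of \autoref{lemma:keylem}. Write $n=\#X$ and fix any point $x_0\in X$.

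For the lower bound: suppose $\alpha\colon X\to\mathcal{H}$ is any quadratic embedding realizing the \CSND\ kernel $K$, i.e.\ $\|\alpha(x)-\alpha(y)\|^2=K(x,y)$. By \autoref{lemma:keylem}, the family $\{\alpha(x):x\in X\}$ is affinely independent in $\mathcal{H}$, so by \autoref{lemma:affind}\emph{(b)} the $n-1$ vectors $\{\alpha(x)-\alpha(x_0):x\in X\setminus\{x_0\}\}$ are linearly independent. Hence any Hilbert space containing the image has dimension at least $n-1$.

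For the upper bound: I would construct an embedding into $\mathbb{R}^{n-1}$ explicitly. Take any quadratic embedding $\alpha\colon X\to\mathcal{H}$ provided by \autoref{theorem:Sch1}; such an embedding exists because every \CSND\ kernel is in particular \CND. Let $W$ be the affine span of $\{\alpha(x):x\in X\}$. By the previous argument $W-\alpha(x_0)$ is a linear subspace of dimension exactly $n-1$, hence isometrically isomorphic to $\mathbb{R}^{n-1}$. Composing $\alpha$ with the translation by $-\alpha(x_0)$ and this isometric isomorphism gives a map $\alpha'\colon X\to\mathbb{R}^{n-1}$ which preserves pairwise distances (translations are isometries and linear isometric isomorphisms preserve norms), whence $\|\alpha'(x)-\alpha'(y)\|^2=K(x,y)$.

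Neither step presents a serious obstacle; the content is already packaged in \autoref{lemma:keylem} and \autoref{lemma:affind}. The only thing to be careful about is phrasing the upper bound so that it applies to \emph{any} \CSND\ kernel rather than to a specific embedding — this is ensured by starting from a guaranteed Schoenberg embedding and then cutting down to the affine span, which depends only on the kernel up to isometry.
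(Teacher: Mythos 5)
Your proof is correct and follows exactly the route the paper intends: the corollary is stated without proof as an immediate consequence of \autoref{lemma:keylem}\emph{(b)} together with \autoref{lemma:affind}\emph{(b)}, and your two steps (linear independence of the $\#X-1$ difference vectors for the lower bound, restriction to the affine span for the upper bound) are precisely that argument made explicit.
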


\subsection{Construction lemma}\label{sec:2.4}
The construction discussed in the present section is a simple adaptation of a construction given in \cite{Bo1}. For $i=1,2$, let $X_i$ be a set, $e_i\in X_i$ a distinguished point and let $K_i$ be a kernel on $X_i$. Consider a pointed set $X= X_1\!{}_{\phantom{1}e_1}\!\star_{e_2} X_2:=X_1\sqcup X_2 \slash\!\!\!\sim$, where the relation is given by $e_1\sim e_2$. In the graph theory this construction is usually called \emph{the Markov sum of graphs}, while in the topology \emph{wedge sum of topological spaces}. We will denote this space simply by $X_1\!\star X_2$. Now, we define a kernel on $X$:
\begin{displaymath}
K(x,y) = \left\{ \begin{array}{lll}
K_1(x,y) & \textrm{if} & x,y\in X_1,\\
K_1(x,e_1)+K_2(e_2,y)& \textrm{if} & x\in X_1, y\in X_2,\\
K_2(x,e_2)+K_1(e_1,y)& \textrm{if} & x\in X_2, y\in X_1,\\
K_2(x,y) & \textrm{if} & x,y\in X_2,\\
\end{array} \right.  
 \end{displaymath}
 The Markov sum is important from the point of view of free probability and spectral analysis on graphs: the adjacency matrix of a graph $X$ which is a Markov sum of two other graphs, viewed as an operator on $\ell^2(X)$ can be represented as a sum of two boolean independent operators: the adjacency matrices of the summand graphs (in the natural vacuum state). It also applies to other graph-theoretic constructions like the comb product (associated with monotone independence) or the free product (associated with free independence), a nice reference for those is \cite{HO}. One has the following 
\begin{lemma}\label{lemma:ms}
 If two kernels $K_i$ defined on $X_i\ni e_i$, $i=1,2$, are \CSND\ Schoenberg kernels, so is $K$ on $X_1\!\star X_2$.
\end{lemma}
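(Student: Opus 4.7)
The plan is to use \autoref{lemma:keylem} to reduce the claim to a statement about affine independence of quadratic embeddings, and then construct an explicit quadratic embedding of $(X_1\!\star X_2, K)$ from quadratic embeddings of the summands, via an orthogonal direct sum anchored at the distinguished point.

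First I would invoke \autoref{lemma:keylem} on each $(X_i,K_i)$ to obtain quadratic embeddings $\alpha_i\colon X_i\to\mathcal{H}_i$ whose images are affinely independent. After translating each $\mathcal{H}_i$ by $-\alpha_i(e_i)$, I may assume that $\alpha_i(e_i)=0$. Set $\mathcal{H}=\mathcal{H}_1\oplus\mathcal{H}_2$ and define $\alpha\colon X_1\!\star X_2\to\mathcal{H}$ by $\alpha(x)=(\alpha_1(x),0)$ for $x\in X_1$ and $\alpha(x)=(0,\alpha_2(x))$ for $x\in X_2$. The definition is unambiguous at the identified basepoint $e=e_1=e_2$ because both expressions return $(0,0)$. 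A short case-by-case computation shows that $\|\alpha(x)-\alpha(y)\|^2=K(x,y)$ for $x,y$ in the same $X_i$ (by the isometry property of $\alpha_i$) and, when $x\in X_1$, $y\in X_2$, by orthogonality:
\[
\|\alpha(x)-\alpha(y)\|^2=\|\alpha_1(x)\|^2+\|\alpha_2(y)\|^2=K_1(x,e_1)+K_2(e_2,y),
\]
matching the definition of $K$ on cross-terms. Hence $\alpha$ is a genuine quadratic embedding of $(X_1\!\star X_2, K)$.

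It remains to verify that the image $\{\alpha(x):x\in X_1\!\star X_2\}$ is affinely independent. Suppose $\lambda\colon X_1\!\star X_2\to\mathbb{R}$ satisfies $\sum_x\lambda(x)=0$ and $\sum_x\lambda(x)\alpha(x)=0$. Projecting onto $\mathcal{H}_1$ and $\mathcal{H}_2$ separately yields
\[
\sum_{x\in X_1\setminus\{e\}}\lambda(x)\alpha_1(x)=0\qquad\text{and}\qquad\sum_{x\in X_2\setminus\{e\}}\lambda(x)\alpha_2(x)=0.
\]
To apply the affine independence of $\alpha_1(X_1)$, augment $\lambda$ restricted to $X_1\setminus\{e\}$ by setting its value at $e_1$ to be $-\sum_{x\in X_1\setminus\{e\}}\lambda(x)$; this does not alter the vector sum since $\alpha_1(e_1)=0$, and the augmented coefficients sum to zero. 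By condition \emph{(c)} of \autoref{lemma:affind} for $\alpha_1(X_1)$, the augmented vector is identically zero, so $\lambda(x)=0$ for every $x\in X_1\setminus\{e\}$. The analogous argument on the $X_2$ side gives $\lambda(x)=0$ for every $x\in X_2\setminus\{e\}$. Finally, $\lambda(e)=-\sum_{x\neq e}\lambda(x)=0$. Thus $\lambda\equiv 0$, affine independence holds, and \autoref{lemma:keylem} returns the \CSND\ property of $K$.

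The only delicate point is the bookkeeping around the basepoint: the identification $e_1\sim e_2$ forces one to be careful that the two quadratic embeddings are translated to the same point of $\mathcal{H}$, and that in the affine independence argument the "missing" coefficient at $e_i$ can be resupplied without disturbing either the vector sum (thanks to the normalisation $\alpha_i(e_i)=0$) or the zero-sum condition. Once this is set up, both the isometry check and the affine independence argument are short.
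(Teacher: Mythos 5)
Your proof is correct and follows essentially the same route as the paper: translate so that $\alpha_i(e_i)=0$, take the orthogonal direct sum embedding into $\mathcal{H}_1\oplus\mathcal{H}_2$, check the quadratic-embedding identity by the Pythagorean theorem, and deduce affine independence of the image from that of the summands via \autoref{lemma:keylem}. You merely spell out the basepoint bookkeeping and the projection argument that the paper leaves implicit.
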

\begin{proof}
Let $\alpha_i\colon X_i\to\mathcal{H}_i$, $i=1,2$, be two quadratic embeddings associated to these kernels. Since composition with a translation preserves \eqref{eq:*}, one may assume that $\alpha_i(e_i)=0$. Define $\alpha=\alpha_1\oplus\alpha_2\colon X_1\star X_2\to\mathcal{H}_1\oplus\mathcal{H}_2$, it is clear that the kernel induced by the map $\alpha$ via the equality \eqref{eq:*} is precisely $K$: this is due to the definition of the kernel $K$ and Pythagorean theorem by orthogonality of summands; once again thanks to orthogonality of summands, the image under $\alpha$ of the set $X_1\star X_2$ is affinely independent and our conclusion follows from \autoref{lemma:keylem}\emph{(b)}.
\end{proof}
\begin{rmk}
 Let us note that this lemma could be proved completely algebraically, without restriction to the case Schoenberg kernels. The proof relies on a similar type of argument, but needs some technical computations. Details, in the case of \CND\ kernels, can be found in \cite{Bo2}.
\end{rmk}
\subsection{An obstruction}\label{sec:2.5}
Let $G = (V, E)$ be a connected graph, the path metric $\partial$ on $V$ is defined by setting $\partial(x,y)$ to be the length of shortest path connecting $x, y\in V$. 
\begin{rmk}
If $G_0\subset G$ is a subgraph such that the inclusion is an isometric embedding of metric spaces, then the metric on $G_0$ inherits spectral properties from $G$. In particular, if $G_0$ is a subgraph such that its metric is not \CSND, the metric of $G$ does not have this property. 
\end{rmk}
\begin{lemma}\label{lemma:obst}
If the shortest cycle in a graph has even length, then the metric is not \CSND.
\end{lemma}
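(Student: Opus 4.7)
The plan is to reduce the problem to a computation on the shortest cycle itself and then exhibit a concrete witness showing that its distance matrix is singular with a zero-sum kernel vector.

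First, let $C\subseteq G$ be a shortest cycle, of even length $2n$ with $n\geq 2$. I claim that $C$, viewed with its own path metric, sits inside $G$ as an isometric subgraph. Indeed, if $x,y\in C$ are at cycle-distance $k\leq n$, any path in $G$ of length strictly less than $k$ joining them, combined with the shorter arc of $C$, would yield a closed walk of length $<2n$ and hence a cycle strictly shorter than $C$, contradicting minimality. By the preceding remark, it therefore suffices to show that the path metric on $C_{2n}$ is not \CSND\ for every $n\geq 2$.

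To do this, I plan to apply the equivalence \emph{(a)}$\iff$\emph{(c)} of \autoref{lemma:keylem}, so that the task reduces to producing a nonzero $\lambda\colon\{0,1,\ldots,2n-1\}\to\mathbb{R}$ with $\sum_k\lambda(k)=0$ that lies in the kernel of the distance matrix of $C_{2n}$. This matrix is circulant with first row $(0,1,2,\ldots,n,n-1,\ldots,1)$, so its spectrum consists of
\[
\mu_j=\sum_{k=0}^{2n-1}d(0,k)\,\omega^{jk},\qquad\omega=e^{\pi i/n},\quad j=0,1,\ldots,2n-1,
\]
with eigenvectors $v_j=(\omega^{jk})_{k=0}^{2n-1}$. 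A closed form for $\sum_{k=1}^{n-1}kz^k$ evaluated at $z=e^{2\pi i/n}$ gives $\sum_{k=1}^{n-1}k\cos(2\pi k/n)=-n/2$; plugging this into the expression for $\mu_2$ yields $\mu_2 = n + 2\sum_{k=1}^{n-1}k\cos(2\pi k/n) = 0$. The corresponding eigenvector $v_2$ has components $e^{2\pi ik/n}$, and as $k$ runs over $0,\ldots,2n-1$ the $n$-th roots of unity are each traversed twice, so $\sum_k v_2(k)=0$. Its real part is then a nonzero real $\lambda$ with $\sum\lambda(k)=0$ in the kernel of the distance matrix, the required witness.

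The only nontrivial step is producing the explicit zero eigenvalue of the circulant distance matrix; once one guesses that $j=2$ is the right index, the verification collapses to the standard identity for $\sum k\cos(2\pi k/n)$. A more geometric alternative would be to construct an affine dependence directly among the images of a quadratic embedding of $C_{2n}$ and then invoke \autoref{lemma:keylem}\emph{(b)}, but the Fourier-analytic route seems cleaner because the circulant structure hands us both the eigenvalue and the zero-sum condition in one stroke.
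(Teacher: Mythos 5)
Your proof is correct, but it takes a different route from the paper's. The paper does not work with the full $2n\times 2n$ circulant distance matrix at all: it picks just four vertices on the (isometrically embedded) shortest cycle, namely two antipodal pairs, writes down their $4\times4$ distance matrix
\[
D=\left[\begin{array}{cccc}0&k&n+k&n\\k&0&n&n+k\\n+k&n&0&k\\n&n+k&k&0\end{array}\right],
\]
and observes by inspection that $(1,-1,1,-1)^\top$ is a zero-sum vector in its kernel, then invokes \autoref{lemma:keylem}\emph{(a)} on this four-point subspace. Your Fourier-analytic argument finds the witness $\lambda(k)=\cos(2\pi k/n)$ on the whole cycle via the vanishing eigenvalue $\mu_2=n+2\sum_{k=1}^{n-1}k\cos(2\pi k/n)=0$ of the circulant matrix; the identity and the zero-sum property check out, and for the $4$-cycle your eigenvector is literally the paper's $(1,-1,1,-1)$, so your construction can be seen as a global extension of the paper's local one. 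What the paper's version buys is brevity and portability: a four-point witness needs no trigonometric identity, and it transfers verbatim to the continuous setting (the appendix reuses the same $4\times4$ matrix with $k,n\in\mathbb{R}_{>0}$ for isometrically embedded circles in $\mathbb{R}$-graphs). What yours buys is more structural information (in principle the whole spectrum of the cycle's distance matrix) at the cost of the closed-form evaluation of $\sum k\cos(2\pi k/n)$. One presentational remark: like the paper, you conclude via \autoref{lemma:keylem}\emph{(a)}, but since you have an explicit zero-sum kernel vector it is cleaner to note directly that $\sum_{x,y}\lambda(x)\lambda(y)\partial(x,y)=\langle\lambda,D\lambda\rangle=0$ fails the strict inequality, which sidesteps the fact that \autoref{lemma:keylem} is stated under the hypothesis that the kernel is already \CND.
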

\begin{proof}
Since the shortest cycle is always isometrically embedded, it is enough to restrict our attention to this cycle of even length. On such a cycle one can find a set of four vertices $(x_i)_{i=1}^4$ such that two of them are antipodes of the other two. In particular, the matrix of distances between them is of the form 
$$D=[\partial(x_i,x_j)]_{i,j=1}^4=\left[\begin{array}{cccc}0&k&n+k&n\\k&0&n&n+k\\n+k&n&0&k\\n&n+k&k&0\end{array}\right]$$
It is now easy to see that the vector $v = (1,-1,1,-1)^\top$ is in kernel of the matrix $D$, so the assertion follows from \autoref{lemma:keylem}\emph{(a)}.
\end{proof}
\section{Applications and graph theoretic examples}\label{sec:3}
The comb product of graphs (cf.~\cite{HO, Ob}) is constructed in a similar way to Markov product of graphs (cf. \autoref{sec:2.4}): given two graphs $(V_1,E_1), (V_2,E_2)$ and distinguished point $v_2\in V_2$, we form a graph $V= V_1\triangleright_{v_2} V_2=V_1\triangleright V_2$ by attaching to each point of $V_1$ a copy of $V_2$ based in~$v_2$. Formally, enumerate elements of $V_1=\{v_1, v_2, \ldots\}$, define $V^{(0)}=V_1$ and $V^{(n)}=V_1^{(n-1)}{}_{\phantom{1}{v_n}}\!\star_{v_2} V_2$ with an obvious embedding $V^{(n-1)}$ into $V^{(n)}$. Set $V=\varinjlim V^{(n)}$. Thanks to the construction lemma from \autoref{sec:2.4} and desription of $V$ as inductive limit, we have the following
\begin{proposition}
If $V_i$ are graphs whose metrics are \CSND, then so is the metric on $V_1\triangleright V_2$.
\end{proposition}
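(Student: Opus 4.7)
The plan is to unfold the inductive-limit presentation of $V=V_1\triangleright V_2$ given just before the statement, apply \autoref{lemma:ms} at each finite stage, and then lift the conclusion to the colimit by exploiting the fact that \CSND{} is tested only against finitely supported functions.

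First I would check that the path metric on each intermediate space $V^{(n)}=V^{(n-1)}\!\star_{v_2}V_2$ coincides with the Markov-sum kernel built from the path metrics on $V^{(n-1)}$ and $V_2$. The point is graph-theoretic: every geodesic between a vertex in the newly glued copy of $V_2$ and a vertex of $V^{(n-1)}$ must traverse the identification point $v_n=v_2$, so the path metric splits as a sum exactly in the way dictated by the formula for $K$ in \autoref{sec:2.4}. The same observation shows that the natural maps $V^{(n)}\hookrightarrow V^{(n+1)}\hookrightarrow V$ are isometric embeddings of metric spaces.

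Next I would prove by induction on $n$ that the path metric on $V^{(n)}$ is \CSND. The base case $V^{(0)}=V_1$ is the hypothesis. For the inductive step, $V^{(n)}$ is literally a Markov sum of $V^{(n-1)}$, whose metric is \CSND{} by the inductive hypothesis, and $V_2$, whose metric is \CSND{} by assumption; by \autoref{lemma:ms} the resulting Schoenberg kernel is \CSND.

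Finally, to pass to $V=\varinjlim V^{(n)}$, take any nonzero $\lambda\in c_c(V)$ with $\sum_{x\in V}\lambda(x)=0$. Its support is finite, hence contained in some $V^{(n)}$. By the isometric embedding established in the first step, the double sum $\sum_{x,y\in V}\lambda(x)\lambda(y)\partial(x,y)$ equals the analogous expression computed for the \CSND{} metric on $V^{(n)}$, which is strictly negative. This is exactly the inequality defining \CSND{} for the path metric on $V$.

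The only place that might look delicate is the step to the inductive limit when $V_1$ is infinite, but because the \CSND{} condition is a statement about finitely supported test functions, no additional topological or functional-analytic input is required; the argument truly reduces to a single iteration of \autoref{lemma:ms} localized to the finite subcomplex $V^{(n)}$ containing the support of $\lambda$.
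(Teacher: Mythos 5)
Your proof is correct and follows the same route the paper takes (the paper's own proof is just the one-line remark preceding the statement, invoking the construction lemma of \autoref{sec:2.4} and the description of $V_1\triangleright V_2$ as an inductive limit of iterated Markov sums). You have merely written out in full the induction and the finite-support reduction that the paper leaves implicit.
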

The free product of graphs is a little more laborious to construct and more details can be found in~\cite{ALS}. We begin with defining free product of pointed sets: for a family $(S_i, e_i)_{i\in I}$ their free product is a pointed set $(S,e)=(\fp\limits_{i\in I}S_i,e)$ defined as follows: $$S=\{e\}\cup\{s_1\ldots s_m: s_k\in S_{i_k}\backslash\{e_{i_k}\}\textrm{ and }i_1\neq i_2\neq\ldots\neq i_m, m\in\mathbb{N}\}.$$ The set of vertices of free product of graphs is $\fp\limits_{i\in I}(V_i,v_i)$. The set of edges of the free product of graphs $(V_i,E_i)_{i \in I}$ is defined as follows: $$\fp\limits_{i\in I}E_i=\left\{\{vu,vu'\}:\ \{v,v'\}\in\bigcup_{i\in I}E_i\textrm{ and }u,vu,v'u\in\fp\limits_{i\in I}V_i\right\}.$$ Because in $G_1\hexstar G_2$ one can find an increasing sequence of graphs $\{H_n\}$ satisfying:
\begin{itemize}
\item $H_n$ is a result of finitely many operations of Markov sums with summands $G_1, G_2$ and previously constructed graphs,
\item $H_n$ is isometrically embedded in $G_1\hexstar G_2$,
\item $G_1\hexstar G_2$ is an union of $H_n$'s. 
\end{itemize}
we have the following
\begin{proposition}
Given two graphs with distinguished vertices $(V_i, v_i)$, $i=1,2$, whose metrics are \CSND, the metric on $V_1\hexstar V_2$ is also \CSND.
\end{proposition}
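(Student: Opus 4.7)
My plan is to take seriously the three bullet points the author has just listed for the exhausting sequence $\{H_n\}$ in $V_1\hexstar V_2$: they already encode the proof modulo a transfer-to-the-limit argument. The strategy is (i) to show that every $H_n$ carries a \CSND\ metric by iterating \autoref{lemma:ms}, and (ii) to observe that \CSND\ is a condition involving only finitely supported functions, so it passes through the increasing union along isometric inclusions.

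For (i), I would argue by induction on the number of Markov sums used to build $H_n$. The base case is either $G_1$ or $G_2$, for which the metric is \CSND\ by hypothesis and is a Schoenberg kernel by \autoref{ex:0}. At the inductive step, $H_{n}$ is constructed from some graph $H$ (already built) and one of $G_1$ or $G_2$ by a Markov sum at a distinguished vertex. Because $H$ is isometrically embedded in $V_1\hexstar V_2$ and isometric embeddings preserve Schoenberg-ness together with \CSND-ness at the level of subsets, the metric on $H$ is a \CSND\ Schoenberg kernel. Then \autoref{lemma:ms} applied to $H$ and the relevant $G_i$ yields that the path metric on $H_{n}$ (which coincides with the kernel produced by the Markov-sum formula, exactly because the subgraphs are isometrically embedded in $V_1\hexstar V_2$) is \CSND.

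For (ii), take an arbitrary $\lambda\in c_c(V_1\hexstar V_2)\setminus\{0\}$ with $\sum_x\lambda(x)=0$. Its support is a finite subset of $\bigcup_n H_n$, so by the union property it already lies in some $H_N$. By the isometric embedding bullet, the path metric of $V_1\hexstar V_2$ restricted to $H_N$ agrees with the path metric of $H_N$; hence
\[
\sum_{x,y}\lambda(x)\lambda(y)\,\partial_{V_1\hexstar V_2}(x,y)=\sum_{x,y}\lambda(x)\lambda(y)\,\partial_{H_N}(x,y)<0,
\]
where the strict inequality is the \CSND-ness of $H_N$ proved in step (i). Since $\lambda$ was arbitrary, this gives \CSND\ of the metric on $V_1\hexstar V_2$.

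The only part I would expect to be slightly delicate is the isometric embedding bullet, i.e.\ verifying that when one amalgamates a new copy of $G_1$ or $G_2$ along an existing vertex to enlarge $H_{n-1}$ to $H_n$, shortest paths in $V_1\hexstar V_2$ between vertices of $H_n$ stay inside $H_n$. This, however, is standard for free products of graphs (and is exactly what is referenced to \cite{ALS}): any geodesic between two reduced words factors through the unique block decomposition, so one never needs to leave the current $H_n$. Modulo that fact, which the paper imports from the literature, the proposition reduces to combining \autoref{lemma:ms} with the finite-support nature of \CSND.
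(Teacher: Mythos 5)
Your proposal is correct and is precisely the argument the paper intends: the paper gives no separate proof beyond the three bullet points, leaving implicit exactly the two steps you spell out (iterate \autoref{lemma:ms} over the $H_n$'s, then use that \CSND\ is a condition on finitely supported $\lambda$ to pass to the increasing union along isometric embeddings). Your identification of the isometric-embedding bullet as the only delicate point, imported from \cite{ALS}, also matches the paper's treatment.
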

The most important application of this lemma is when we consider the Cayley graphs of discrete groups. We have
\begin{proposition}
If $\Gamma_i=\ls S_i|R_i \rs$, $i=1,2$ are discrete groups such that the path metric on $Cay(\Gamma_i, S_i)$ is \CSND, so is the metric on $Cay(G_1\hexstar G_2, S_1\cup S_2)$.
\end{proposition}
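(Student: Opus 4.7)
The plan is to reduce this statement to the previous proposition on free products of graphs by identifying the Cayley graph of the free product of groups with the free product of the respective Cayley graphs. More precisely, I would take the distinguished vertices to be the identity elements $e_i\in\Gamma_i$ and argue that
\[
Cay(\Gamma_1\hexstar\Gamma_2,S_1\cup S_2)\cong Cay(\Gamma_1,S_1)\hexstar Cay(\Gamma_2,S_2)
\]
as graphs with a distinguished basepoint. Once this is established, the desired \CSND\ property follows by direct application of the preceding proposition about free products of graphs.

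To verify the identification, I would match both the vertex sets and the edge sets with the definition recalled in the text. On the vertex side: every element of $\Gamma_1\hexstar\Gamma_2$ has a unique normal form as an alternating word $g_{i_1}g_{i_2}\cdots g_{i_m}$ with $g_{i_k}\in\Gamma_{i_k}\setminus\{e_{i_k}\}$ and $i_1\neq i_2\neq\cdots\neq i_m$, which is exactly the description of $\fp_{i\in\{1,2\}}(V_i,v_i)$ when $V_i=\Gamma_i$ and $v_i=e_i$. On the edge side: a Cayley edge has the form $\{g,gs\}$ with $s\in S_1\cup S_2$; writing $g$ in normal form and using that $s\in S_i$ multiplies the last (or appends a new) syllable from $\Gamma_i$, one sees that such edges are precisely of the form $\{wu,wu'\}$ where $\{u,u'\}$ is an edge in one of the $Cay(\Gamma_i,S_i)$ and $w,wu,wu'$ all lie in the free product of pointed vertex sets, matching the edge set of $Cay(\Gamma_1,S_1)\hexstar Cay(\Gamma_2,S_2)$.

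The final step is to check that this bijection of graphs respects the path metrics, so that the \CSND\ property really transfers. This is immediate from the fact that a geodesic in the free product graph is obtained by concatenating geodesic segments in the factors (which is both a standard feature of the graph-theoretic free product and a consequence of the normal form in $\Gamma_1\hexstar\Gamma_2$, where the word length with respect to $S_1\cup S_2$ equals the sum of the word lengths of the syllables in their respective generating sets). I do not expect any genuine obstacle here: the only content is the combinatorial identification, and once it is in hand the conclusion is a direct invocation of the previous proposition.
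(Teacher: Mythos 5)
Your proposal is correct and follows exactly the paper's argument: the paper's entire proof is the observation that $Cay(\Gamma_1,S_1)\hexstar Cay(\Gamma_2,S_2)=Cay(\Gamma_1\hexstar\Gamma_2,S_1\cup S_2)$, after which the preceding proposition on free products of graphs applies. You have simply spelled out the normal-form verification of this identification, which the paper leaves implicit.
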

\begin{proof}
It is enough to observe $Cay(\Gamma_1,S_1)\hexstar Cay(\Gamma_2,S_2)=Cay(G_1\hexstar G_2, S_1\cup S_2)$, where the first $\hexstar$ denotes free product of graphs, and the other denotes the free product of groups.
\end{proof}
Observe that graphs having $\leq3$ vertices have \CSND\ metrics because of arithmetic reasons, so this gives us the following 
\begin{corollary}\label{corollary:regtree}
 The path metric on the graph $Cay(\fp\limits_{i\in I}\bigslant{\mathbb{Z}}{2\mathbb{Z}},\{1_i:i\in I\})$ (the Cayley graph of free Coxeter group or the $\# I$-regular tree), is \CSND. 
\end{corollary}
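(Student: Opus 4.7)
The plan is to derive the statement directly from the preceding proposition, using $\bigslant{\mathbb{Z}}{2\mathbb{Z}}$ as the building block. The base case is trivial: $Cay(\bigslant{\mathbb{Z}}{2\mathbb{Z}},\{1\})$ consists of two vertices joined by a single edge, whose $2\times 2$ distance matrix (zeros on the diagonal, ones off) has eigenvalues $\pm 1$ and is therefore invertible. By \autoref{lemma:keylem}\emph{(a)} its metric is \CSND --- this is the ``arithmetic reasons'' observation cited immediately before the corollary.

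For finite $I$, I would then apply the preceding proposition inductively, successively free-producting with further copies of $\bigslant{\mathbb{Z}}{2\mathbb{Z}}$; each step preserves \CSND, so after $\#I$ iterations the metric on the full Cayley graph is \CSND.

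For infinite $I$ the passage is by finiteness of support. \CSND\ is tested only on $\lambda\in c_c(X)\setminus\{0\}$ with $\sum\lambda = 0$, and such a $\lambda$ is supported on a finite set of group elements, each being a reduced word involving only finitely many generators $1_i$. Hence $\supp\lambda$ lies inside the sub-free-product $\fp\limits_{i\in I_0}\bigslant{\mathbb{Z}}{2\mathbb{Z}}$ for some finite $I_0\subset I$. Since word length in a free product of groups is computed letter-by-letter, the Cayley graph of this sub-free-product embeds isometrically into the full Cayley graph, so the finite case applied to $I_0$ delivers the strict inequality for $\lambda$.

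The only mild step needing comment is the isometric embedding of sub-free-products' Cayley graphs into the full one, a standard consequence of the normal form for free products of groups. No substantive obstacle arises; the entire corollary is essentially an assembly of ingredients already in hand.
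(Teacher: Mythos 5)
Your argument is correct and follows the same route the paper intends: the two-vertex Cayley graph of $\bigslant{\mathbb{Z}}{2\mathbb{Z}}$ is \CSND\ for the trivial arithmetic reason, and the preceding proposition on free products of groups propagates this to the whole Cayley graph. Your explicit handling of infinite $I$ via finite supports and isometric embedding of sub-free-products is a detail the paper leaves implicit, but it is the standard and correct way to pass to the limit.
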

\begin{corollary}
 The path metric on any tree is \CSND. In particular, so is the word metric on free groups.
\end{corollary}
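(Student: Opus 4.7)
The plan is to reduce to finite trees via the fact that conditional strict negative definiteness is a finitary condition, then build up any finite tree inductively as an iterated pointed sum of single edges and apply the construction lemma (\autoref{lemma:ms}).

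First, I would observe that \CSND\ is inherited by subsets: if $Y\subseteq X$ and $K\colon X\times X\to\mathbb{R}$ is \CSND, then extending any $\lambda\in c_c(Y)\setminus\{0\}$ with $\sum_{y\in Y}\lambda(y)=0$ by zero to $X$ shows that the restricted kernel $K|_{Y\times Y}$ is \CSND. Now let $T$ be an arbitrary tree with path metric $\partial$. To check that $\partial$ is \CSND\ it suffices to verify, for every finite subset $Y\subseteq T$, that $\partial|_{Y\times Y}$ is \CSND. Any such $Y$ is contained in the (finite) subtree $T_Y$ it spans, and since subtrees are convex in $T$ (all geodesics between points of $Y$ lie inside $T_Y$), the restriction of the path metric of $T$ to $Y$ coincides with the restriction of the path metric of $T_Y$ to $Y$. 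Thus it is enough to prove that every finite tree has \CSND\ path metric.

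Next, I would proceed by induction on the number of vertices of a finite tree $T$. The case of $1$ or $2$ vertices is trivial: for $K_2$, the distance matrix is $\bigl[\begin{smallmatrix}0&1\\1&0\end{smallmatrix}\bigr]$, which is invertible, hence \CSND\ by \autoref{lemma:keylem}\emph{(a)}. For the inductive step, pick any leaf $v$ of $T$, let $u$ be the unique vertex adjacent to $v$, and set $T':=T\setminus\{v\}$. Then $T$ is precisely the pointed sum $T'\!{}_{\phantom{1}u}\!\star_{u'} K_2$, where $u'$ denotes the endpoint of the edge $K_2$ used to glue: indeed, distances between points of $T'$ are unchanged, the new vertex $v$ is at distance $1$ from $u$, and for any $x\in T'$ we have $\partial(x,v)=\partial(x,u)+1$, which matches the pointed-sum formula. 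By induction $T'$ has \CSND\ path metric, and $K_2$ does too, so \autoref{lemma:ms} yields that $T$ has \CSND\ path metric.

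Finally, the statement about free groups follows from the first part: the Cayley graph $\mathrm{Cay}(F_n, S)$ of the free group $F_n$ with the standard generating set $S=\{s_1,\ldots,s_n\}$ is the $2n$-regular tree, and the word metric coincides with its path metric. The main conceptual obstacle is essentially already absorbed in \autoref{lemma:keylem} and \autoref{lemma:ms}: the only point requiring a small verification is that passing from a tree to an arbitrary finite subset of vertices does not change the pairwise distances, which is automatic from the tree structure. No delicate analysis of the possibly infinite case is needed because \CSND\ is a local (finitary) condition.
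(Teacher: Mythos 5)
Your proof is correct, but it takes a different route from the paper. The paper's proof is a one-line reduction: any tree $T$ embeds isometrically into a $\kappa$-regular tree with $\kappa=\sum_{t\in T}\deg(t)$, and regular trees are already known to be \CSND\ by \autoref{corollary:regtree} (which in turn rests on the free product of graphs machinery), combined with the remark in \autoref{sec:2.5} that isometrically embedded subgraphs inherit the property. You instead argue from the ground up: you first make explicit that \CSND\ is a finitary condition inherited by isometrically embedded subsets, reduce to the finite spanning subtree of a finite vertex set, and then build any finite tree by repeatedly attaching a leaf edge via the pointed sum, invoking \autoref{lemma:ms} at each step with the trivial base case $K_2$. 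Both arguments ultimately bottom out in the same construction lemma (the paper's \autoref{corollary:regtree} is itself obtained by iterated Markov sums of $\leq3$-vertex graphs), but your version is more self-contained --- it bypasses the free product of graphs and the Cayley-graph identification entirely --- and it is more careful about the infinite case, which the paper handles implicitly through the (possibly infinite-degree) regular tree. The paper's version buys brevity by reusing already established results. All the individual steps you take (heredity under restriction, convexity of spanning subtrees, the identification of leaf attachment with the pointed sum, and the $2n$-regular tree as the Cayley graph of $F_n$) check out.
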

\begin{proof}
 It is enough to observe that any tree $T$ can be isometrically embedded in a regular tree: let $\kappa=\sum\limits_{t\in T} \mathrm{deg}(t)$ and for $\kappa$-regular tree the assertion holds. \end{proof}
\begin{proposition}\label{prop:complete}
Metric of every complete graph $K_{n+1}$ is \CSND\ for any $n\in\mathbb{N}$.
\end{proposition}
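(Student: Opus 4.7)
The plan is to apply \autoref{lemma:keylem}\emph{(b)} by exhibiting an explicit quadratic embedding of $K_{n+1}$ into a Hilbert space whose image is affinely independent. Label the vertices $x_0,\ldots,x_n$ and set $\alpha(x_i)=\tfrac{1}{\sqrt{2}}\,e_i\in\mathbb{R}^{n+1}$, where $(e_i)_{i=0}^n$ is the standard orthonormal basis. A direct computation gives $\|\alpha(x_i)-\alpha(x_j)\|^2=\tfrac12(1+1)=1$ for $i\neq j$ and $0$ for $i=j$, which is precisely the path metric of $K_{n+1}$. This already shows (by Schoenberg's direction recalled after \autoref{theorem:Sch1}) that the metric is a \CND\ Schoenberg kernel, putting us in the setting of \autoref{lemma:keylem}.

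It then remains to verify that the $n+1$ vectors $\alpha(x_0),\ldots,\alpha(x_n)$ are affinely independent. This is immediate since the differences $\alpha(x_i)-\alpha(x_0)=\tfrac{1}{\sqrt2}(e_i-e_0)$ for $i=1,\ldots,n$ are clearly linearly independent in $\mathbb{R}^{n+1}$ (equivalently, the $\alpha(x_i)$ are the vertices of a regular $n$-simplex). Hence \autoref{lemma:keylem}\emph{(b)} applies and the metric of $K_{n+1}$ is \CSND.

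As a cross-check, one may instead invoke \autoref{lemma:keylem}\emph{(a)}: the associated matrix is $\tilde{K}=J-I$, where $J$ is the all-ones matrix and $I$ the identity on $\ell^2(\{x_0,\ldots,x_n\})$. Its spectrum is $\{n,-1,\ldots,-1\}$, so $0\notin\sigma(\tilde K)$. Equivalently, for any $\lambda\colon X\to\mathbb{R}$ with $\sum_i\lambda(x_i)=0$ one computes
\[
\sum_{i,j}\lambda(x_i)\lambda(x_j)K(x_i,x_j)=\Bigl(\sum_i\lambda(x_i)\Bigr)^{\!2}-\sum_i\lambda(x_i)^2=-\sum_i\lambda(x_i)^2,
\]
which is strictly negative whenever $\lambda\neq 0$. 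There is no real obstacle here: the only thing to observe is that the regular simplex embedding (or the one-line spectral calculation) realizes the \CSND\ condition completely.
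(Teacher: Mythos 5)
Your proof is correct, and your primary route differs from the paper's. The paper goes through \autoref{lemma:keylem}\emph{(a)}: it writes down the distance matrix $D$ of $K_{n+1}$, computes $D\lambda=(\Lambda-\lambda_1,\ldots,\Lambda-\lambda_{n+1})^\top$ with $\Lambda=\sum_k\lambda_k$, and observes that $D\lambda=0$ forces $\lambda=0$, i.e.\ $D$ is invertible. You instead lead with \autoref{lemma:keylem}\emph{(b)}, realizing $K_{n+1}$ as the vertex set of a regular $n$-simplex $\bigl\{\tfrac{1}{\sqrt2}e_i\bigr\}$ and checking affine independence directly; this is more geometric and makes the quadratic embedding explicit, which is in the spirit of how the paper handles the $(2n+1)$-gon and the Markov sum. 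Your cross-check (the spectrum of $J-I$, or equivalently the identity $\sum_{i,j}\lambda_i\lambda_j K(x_i,x_j)=-\sum_i\lambda_i^2$ on the mean-zero subspace) is essentially the paper's argument in disguise, so between the two computations you cover both equivalences of \autoref{lemma:keylem}. All the individual claims check out: $\|\tfrac{1}{\sqrt2}(e_i-e_j)\|^2=1$, the differences $e_i-e_0$ are linearly independent, and $J-I$ has eigenvalues $n$ and $-1$ only. No gaps.
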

\begin{proof}
Since every subgraph of complete graph is again complete, it is enough to consider $\lambda$~with~full support. Let $D$ be the distance matrix for $K_{n+1}$. One has
$$D=\left[ \begin{array}{ccccc}
0 & 1 & 1 & \ldots & 1\\
1 & 0 & 1 & \ldots & 1\\
1 & 1 & 0 & \ldots & 1\\
\vdots & \vdots & \vdots & \ddots & \vdots\\
1 & 1 & 1 & \ldots & 0\\
\end{array} \right].
$$
Then for $\lambda\!=\!~(\!\lambda_1\!,\lambda_2\!,\ldots\!,\lambda_n\!,\lambda_{n+1})^\top$,
setting $\Lambda=\sum\limits_{k=1}^{n+1}\lambda_k$, we have: $D\lambda=(\Lambda-\lambda_1,\Lambda-\lambda_2, \ldots,\Lambda-\lambda_n,\Lambda-\lambda_{n+1})^\top\neq0$ 
for $\lambda\neq0$, which finishes the proof by \autoref{lemma:keylem}\emph{(a)}.
\end{proof}
\begin{corollary}
\CSND\ is a property of a metric, not a group.
\end{corollary}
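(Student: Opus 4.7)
The plan is to produce a single concrete counterexample: one finite group carrying two different generating sets, such that the word metric coming from one is \CSND\ while the word metric coming from the other is not. Once such an example is displayed, the corollary follows immediately, because the group is fixed but the \CSND\ property changes with the choice of generators.

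For the candidate I would take $\Gamma=\bigslant{\mathbb{Z}}{2\mathbb{Z}}\times\bigslant{\mathbb{Z}}{2\mathbb{Z}}$, with elements $\{e,a,b,ab\}$. First I would consider the generating set $S_1=\{a,b,ab\}=\Gamma\setminus\{e\}$. Then $\mathrm{Cay}(\Gamma,S_1)$ is the complete graph $K_4$, so by \autoref{prop:complete} its path metric is \CSND. Next I would consider $S_2=\{a,b\}$. The corresponding Cayley graph is a $4$-cycle $C_4$, whose shortest cycle has (even) length $4$. Hence \autoref{lemma:obst} immediately applies to show that the path metric on $\mathrm{Cay}(\Gamma,S_2)$ is not \CSND. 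Alternatively, one may appeal to \autoref{theorem:2}: $\Gamma$ is a Coxeter group with presentation $\ls a,b\mid a^2,b^2,(ab)^2\rs$ which is manifestly not a free Coxeter group, so the word metric with the Coxeter generators $S_2$ is not \CSND.

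Combining the two observations yields the corollary: the same group $\Gamma$ admits two word metrics, one \CSND\ and one not, so \CSND\ cannot be read off from the abstract group and is genuinely a property of the metric. The only step requiring any thought is checking that the two graphs really are $K_4$ and $C_4$, which is routine, so I do not anticipate any real obstacle in this proof.
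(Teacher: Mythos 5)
Your proof is correct and follows essentially the same route as the paper: the paper also exhibits a single group with two generating sets whose Cayley graphs are $K_4$ (hence \CSND\ by \autoref{prop:complete}) and the $4$-cycle (hence not \CSND\ by \autoref{lemma:obst}), merely using $\bigslant{\mathbb{Z}}{4\mathbb{Z}}$ with $\{0,1,2,3\}$ and $\{1,3\}$ in place of your Klein four-group. Your verification that the two graphs are $K_4$ and $C_4$ is right, and the alternative appeal to \autoref{theorem:2} is also valid.
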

\begin{proof}
Let us consider the group $\bigslant{\mathbb{Z}}{4\mathbb{Z}}$. The metric on $Cay(\bigslant{\mathbb{Z}}{4\mathbb{Z}},\{0,1,2,3\})$ is \CSND\ by \autoref{prop:complete}, while by~\autoref{lemma:obst}, the metric on $Cay(\bigslant{\mathbb{Z}}{4\mathbb{Z}},\{1,3\})$ is not.
\end{proof}
\begin{proposition}
 $(2n+1)$-gon has \CSND.
\end{proposition}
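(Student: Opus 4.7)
The plan is to diagonalize the distance matrix $D$ of the $(2n+1)$-gon by discrete Fourier transform and show that all of its eigenvalues on the hyperplane $\{\lambda:\sum\lambda_k=0\}$ are strictly negative. This verifies the \CSND\ quadratic-form inequality directly (and, equivalently, the invertibility of $D$ required by \autoref{lemma:keylem}\emph{(a)}).

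Labelling vertices $0,1,\ldots,2n$, the path distance $d(j,k)=\min(|j-k|,2n+1-|j-k|)$ depends only on $j-k\bmod(2n+1)$, so $D$ is a symmetric circulant matrix with first row $(0,1,2,\ldots,n,n,\ldots,2,1)$. It is diagonalized by the characters $v_l=(\omega^{lk})_k$, $\omega=e^{2\pi i/(2n+1)}$, with eigenvalues
\[
\mu_l=2\sum_{k=1}^n k\cos(k\theta_l),\qquad \theta_l=\tfrac{2\pi l}{2n+1}.
\]
The constant mode yields $\mu_0=n(n+1)>0$ with eigenvector $\mathbf{1}$, so it remains to prove $\mu_l<0$ for $l=1,\ldots,2n$.

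To obtain a closed form, I would differentiate the standard Dirichlet-type identity $2\sin(\theta/2)\sum_{k=1}^n\sin(k\theta)=\cos(\theta/2)-\cos\bigl((n+\tfrac12)\theta\bigr)$ in $\theta$ and evaluate at $\theta=\theta_l$. At this point $(n+\tfrac12)\theta_l=\pi l$, so the term $\sin((n+\tfrac12)\theta_l)$ vanishes and $\cos((n+\tfrac12)\theta_l)=(-1)^l$. Using also the closed form of $\sum_{k=1}^n\sin(k\theta_l)$ that drops out of the same identity and simplifying, one arrives at
\[
\mu_l=\frac{(-1)^l\cos(\theta_l/2)-1}{2\sin^2(\theta_l/2)}.
\]
Since $\theta_l/2=\pi l/(2n+1)\in(0,\pi)$ for these $l$, we have $|\cos(\theta_l/2)|<1$ strictly, and so $\mu_l<0$.

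Finally, any $\lambda\colon X\to\mathbb{R}$ with $\sum\lambda_k=0$ is orthogonal to $\mathbf{1}$, so it decomposes purely along the remaining Fourier modes; consequently $\sum_{j,k}\lambda_j\lambda_k d(j,k)=\sum_{l\neq 0}\mu_l|\widehat{\lambda}_l|^2<0$ whenever $\lambda\neq 0$, which is precisely \CSND. The main obstacle is the trigonometric bookkeeping in extracting the formula for $\mu_l$: routine differentiation of a standard identity, but it needs to be evaluated carefully at the special points where a half-integer multiple of $\theta_l$ becomes an integer multiple of $\pi$.
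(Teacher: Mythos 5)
Your argument is correct, but it proceeds along a genuinely different route from the one in the paper. You exploit the circulant structure of the distance matrix of the odd cycle: the discrete Fourier transform diagonalizes $D$, the constant mode carries the single positive eigenvalue $n(n+1)$, and the closed form $\mu_l=\bigl((-1)^l\cos(\theta_l/2)-1\bigr)/\bigl(2\sin^2(\theta_l/2)\bigr)$ (which I have checked, e.g. it returns $-1$ for the triangle and $-(1+\cos(\pi/5))/(2\sin^2(\pi/5))$ for the pentagon, as it should) shows every non-constant mode has a strictly negative eigenvalue; restricting to $\sum_k\lambda_k=0$ kills the constant mode and yields the strict inequality, which is the definition of \CSND\ (equivalently, invertibility of $D$ as in \autoref{lemma:keylem}\emph{(a)}). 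The paper instead verifies \autoref{lemma:keylem}\emph{(b)}: it writes down an explicit quadratic embedding $k\mapsto\tau_{\{1,\ldots,2k\}}$, $n+k\mapsto\tau_{\{1,\ldots,2k-1\}'}$ of the $(2n+1)$-gon into the sign vectors $\{\pm1\}^{2n+1}$ (so that squared distances are symmetric differences of sets, reproducing the path metric up to normalization) and observes that the image is linearly independent, hence affinely independent. Your spectral computation buys more quantitative information -- the full spectrum of $D$, hence the exact constant in the strict inequality -- and generalizes mechanically to any circulant kernel on $\bigslant{\mathbb{Z}}{N\mathbb{Z}}$, where it also transparently detects the failure for even cycles (the mode $l=N/2$ gives $\mu_l=0$). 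The paper's embedding is shorter once \autoref{lemma:keylem} is available and fits the geometric theme of the note, but it requires guessing the embedding. One small point to tighten in your write-up: since $D$ is real symmetric while the characters $v_l$ are complex, you should note that $\mu_l=\mu_{2n+1-l}$, so the conjugate pairs of modes span real invariant subspaces and the expansion $\sum_{j,k}\lambda_j\lambda_k d(j,k)=\sum_{l\neq0}\mu_l|\widehat{\lambda}_l|^2$ for real $\lambda\perp\mathbf{1}$ is legitimate; this is routine but worth a sentence.
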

\begin{proof}
Let us identify the $(2n+1)$-gon with the set $X=\{1,2,\ldots,2n+1\}$. For a subset $A\subseteq X$, let $\tau_A=\chi_A-\chi_{A'}$ be the function which is $+1$ on set $A$ and $-1$ on its complement. Up to a normalization constant, one has a quadratic embedding: $k\mapsto \tau_{\{1,\ldots,2k\}}$, for $1\leq k\leq n$; $(n+k)\mapsto \tau_{\{1,\ldots,2k-1\}'}$, for $1\leq k\leq n+1$ and it is now easy to see that the image of $(2n+1)$-gon under this mapping is linearly independent.
\end{proof}
Yet another evidence that the property \CSND\ has much more to do with the graph structure, not the group structure, is contained in the following 
\begin{proposition}
The property that the path metric is \CSND\ is not stable under amalgamated free product.
\end{proposition}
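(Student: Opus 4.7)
I would prove this by exhibiting an explicit counterexample, using \autoref{lemma:obst} as the closing tool: if I can build an amalgamated free product whose Cayley graph has girth $4$, then by the obstruction the path metric cannot be \CSND, even when both factors' Cayley graphs are. The strategic point is that amalgamation over a \emph{non-trivial} subgroup can produce a short relation among the factor generators that did not exist in either factor, so short even cycles appear ``for free''.

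The cleanest candidate I would try first is the following. Take $\Gamma_1=\Gamma_2=\mathbb{Z}$, each with its standard generator; the corresponding Cayley graphs are bi-infinite lines, hence trees, hence \CSND\ by the corollary on trees. Let $H=2\mathbb{Z}$ be embedded in each factor as the unique index-$2$ subgroup. The amalgamated free product is then
$$\Gamma \;=\; \Gamma_1 *_H \Gamma_2 \;=\; \langle a,b \mid a^2=b^2\rangle,$$
with the natural generating set $S=\{a,b\}$. The defining relation forces the closed walk $1 \to a \to a^2=b^2 \to b \to 1$ through four vertices of $\mathrm{Cay}(\Gamma,S)$; it produces a $4$-cycle in the Cayley graph as soon as these four vertices are distinct. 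I would then invoke \autoref{lemma:obst} to conclude.

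The main technical step, and the place where I would spend the bulk of the work, is the verification that this $4$-cycle is genuinely the shortest cycle in $\mathrm{Cay}(\Gamma,S)$. Pairwise distinctness of $1,a,a^2,b$ follows from the normal form theorem for amalgamated free products (Magnus--Karrass--Solitar / Britton's lemma): every non-identity element has a unique reduced form in alternating syllables from $\Gamma_1\setminus H$ and $\Gamma_2\setminus H$, so no collapse occurs. Absence of $2$-cycles follows because neither $a$ nor $b$ has finite order in $\Gamma$, and absence of $3$-cycles follows because the only defining relator $a^2b^{-2}$ has length $4$. Hence the girth of $\mathrm{Cay}(\Gamma,S)$ is exactly $4$, which is even, and \autoref{lemma:obst} gives that its metric is not \CSND. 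The hard part is not the group-theoretic bookkeeping itself (which is standard) but making sure no unforeseen shortcut across the amalgamation shrinks the distances $d(1,a^2)$ or $d(a,b)$ below $2$; normal forms rule this out.
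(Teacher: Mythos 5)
Your proof is correct and follows the same strategy as the paper's: exhibit an amalgamated free product of groups whose Cayley metrics are \CSND\ but in which the amalgamation relation forces an even shortest cycle, then close with \autoref{lemma:obst}. The paper's witness is $(\mathbb{Z}/9\mathbb{Z})\fp_{(\mathbb{Z}/3\mathbb{Z})}(\mathbb{Z}/9\mathbb{Z})=\ls a,b\mid a^9,a^3b^{-3}\rs$, whose factors are $9$-gons (\CSND\ by the odd-cycle proposition) and whose even cycle of length $6$ comes from $a^3=b^3$; your $\mathbb{Z}*_{2\mathbb{Z}}\mathbb{Z}=\ls a,b\mid a^2b^{-2}\rs$ works equally well, with the factors being trees and the girth exactly $4$ (all relators have even length, so the Cayley graph is bipartite and no odd cycle can interfere), so the difference is only in the choice of witness, not in the method.
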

\begin{proof}
Consider $G=\left(\bigslant{\mathbb{Z}}{9\mathbb{Z}}\right)\fp\limits_{\left(\bigslant{\mathbb{Z}}{3\mathbb{Z}}\right)}\left(\bigslant{\mathbb{Z}}{9\mathbb{Z}}\right)=\ls a,b|a^9,a^3b^{-3}\rs$, with the natural generating set coming from \CSND\ graphs $\bigslant{\mathbb{Z}}{9\mathbb{Z}}=\ls a|a^9\rs$. The shortest relation in the group $G$ ($a^3=b^3$) is of even length and the conclusion follows from the \autoref{lemma:obst}.
\end{proof}

\begin{df}{\label{def:brac}}
 Let $S$ be a semigroup, for a positive integer $k\in\mathbb{N}$ define the alternating product of length $k$, $\ls\cdot,\cdot\rs^k\colon S\times S\to S$ by the formula $\ls s,t\rs^k=stst\ldots st$ (each of the letter $s, t$ appears exactly $m$ times for $k=2m$) and $\ls s,t\rs^k=stst\ldots sts$ (the letter $t$ appears exactly $m$ times, while $s$ appears $m+1$ times, when $k=2m+1$).
\end{df}
\begin{df}{\label{def:artcox}}
 Let $S$ be a set and let a list of coefficients $m_{s,t}=m_{t,s}\in\{2,3,\ldots\}\cup\{\infty\}$, where $s\neq t\in S$, be given. A group $G$ given by the presentation $G=\ls S|R\rs$ is called an Artin group if the set of relations is of the following form: $R=\{\ls s,t\rs^{m_{s,t}}=\ls t,s\rs^{m_{s,t}}:s,t\in S\}$ (by convention, $\ls s,t\rs^\infty=\ls t,s\rs^\infty$ is read as: none of $\ls s,t\rs^k=\ls t,s\rs^k$ with $k=2,3,\ldots$ holds). A Coxeter group with generating set $S$ and list of coefficients $m_{s,t}=m_{t,s}\in\{2,3,\ldots\}\cup\{\infty\}$, $s\neq t\in S$, is a quotient of an Artin group with generating set $S$ and list of coefficients $m_{s,t}$ by the normal subgroup generated by $\{s^2:s\in S\}$.
\end{df}

\begin{proof}[Proof of \autoref{theorem:2}]
Assume that relation of the form $\ls s,t\rs^{m_{s,t}}=\ls t,s\rs^{m_{s,t}}$ holds in $G$ for some $s, t\in S$, $s\neq t$ and $2\leq m_{st}<\infty$. Then its word metric is not \CSND. Indeed, for $s\neq t\in S$ such that $m_{s,t}\geq2$ is minimal among all the coefficients, the subgroup generated by those elements, $\ls\{s,t\}\rs\subseteq G$, provides a cycle of length $2m_{s,t}\geq4$ in the Cayley graph (with respect to the generating set $S$), which is isometrically embedded (due to minimality). Thus one direction follows by \autoref{lemma:obst}, while the other implication is already shown in~\autoref{corollary:regtree}.
\end{proof}
\begin{rmk}
 The proof of the fact that the word length of a free Coxeter group is \CSND\ can be also given in terms of its Tits representation, describing appropriate combinatorial structures (root system and Weyl chambers).
\end{rmk}

\begin{proof}[Proof of \autoref{theorem:1}]
Let $\alpha$ be the quadratic embedding for the kernel $K$, we may assume that the target Hilbert space $\mathcal{H}$ of $\alpha$ is $\#X$-dimensional. Since $\{\alpha(x):x\in X\}$ is a finite set of points lying on a single sphere (by \autoref{lemma:affind}\emph{(d)} and \autoref{lemma:keylem}\emph{(b)}) with center $e\in\mathcal{H}$ and radius $r>0$, by composing $\alpha$ with translation by $-e$ we can assume that they all have same norm $\|\alpha(x)\|=r$.

Now if $0\notin \textrm{aff span}\{\alpha(x):x\in X\}$ we are done thanks to \autoref{lemma:fin}: 
\[K(x,y)=\|\alpha(x)-\alpha(y)\|^2 = \|\alpha(x)\|^2-2\ls\alpha(x)|\alpha(y)\rs+\|\alpha(y)\|^2=2r^2-2\ls\alpha(x)|\alpha(y)\rs\] 
and $\ls\alpha(x)|\alpha(y)\rs$ is strictly positive definite by \autoref{ex:1}.

If $0\in \textrm{aff span}\{\alpha(x):x\in X\}$, take any non-zero $v\in\{\alpha(x):x\in X\}^{\bot}$ (which is possible due to \autoref{lemma:fin} and assumption that $\dim\mathcal{H}=\# X$) and once again compose $\alpha$ with translation by $v$. Now, as $\|\alpha(x)+v\|^2=r^2+\|v\|^2$ does not depend on $x$, the vectors $\{\alpha(x)+v:x\in X\}$ remain affinely independent. In particular, $0\notin\textrm{aff span}\{\alpha(x)+v:x\in X\}$ (if $\sum\limits_{x\in X} \lambda(x)(\alpha(x)+v)=0$, then, as $v\perp\alpha(x)$ for all $x\in X$, $\sum\limits_{x\in X} \lambda(x)=0$ and consequently $\sum\limits_{x\in X} \lambda(x)\alpha(x)=0$, so by \autoref{lemma:affind}\emph{(c)}, $\lambda(x)=0$ for all $x\in X$) and now $\alpha(x)+v$ can play the role of $\alpha(x)$ in the calculations from the previous step.
\end{proof}

\section*{Appendix}
We provide a simple proof of a theorem due to Schoenberg; it will be useful in producing a continuous example of a \CSND\ kernel and in \autoref{cor:app} which is a continuous analogue of \autoref{theorem:2}. Recall the following classical result:
\begin{thm}\label{theorem:FTP}
 $$\displaystyle\int_{\mathbb{R}^k}e^{-2t\pi\|x\|}e^{-2\pi i\ls\xi|x\rs}\dd x=\displaystyle\frac{t\cdot c_k}{(t^2+\|\xi\|^2)^{\frac{k+1}{2}}},$$
for $t>0$, where the constant $c_k=\Gamma(\frac{k+1}{2})\sqrt{\pi}^{(-k-1)}$ depends only on the dimension.
\end{thm}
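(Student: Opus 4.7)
The plan is to reduce the Fourier transform of $e^{-2\pi t\|x\|}$ to the (already known) Fourier transform of a Gaussian via the classical subordination identity
$$e^{-a}=\frac{1}{\sqrt{\pi}}\int_0^\infty \frac{e^{-u}}{\sqrt{u}}\,e^{-a^2/(4u)}\,\dd u,\qquad a\geq 0,$$
which one can prove by contour integration, by Laplace-transforming both sides in $a$, or by checking that both sides satisfy the same first-order ODE with matching value at $a=0$. Specializing to $a=2\pi t\|x\|$ represents $e^{-2\pi t\|x\|}$ as an integral (over $u>0$) of Gaussians in $x$ whose variance is proportional to $u$.

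Next, since all integrands are nonnegative, Fubini justifies swapping the $u$- and $x$-integrations. The inner integral is the textbook Gaussian Fourier transform
$$\int_{\mathbb{R}^k}e^{-\pi^2 t^2\|x\|^2/u}\,e^{-2\pi i\ls\xi|x\rs}\,\dd x=\left(\frac{u}{\pi t^2}\right)^{k/2}e^{-u\|\xi\|^2/t^2},$$
which itself reduces coordinatewise to the one-dimensional version via completion of the square. Plugging this back in collapses the problem to the one-variable integral
$$\int_0^\infty u^{(k-1)/2}\exp\!\left(-u\cdot\frac{t^2+\|\xi\|^2}{t^2}\right)\,\dd u=\Gamma\!\left(\tfrac{k+1}{2}\right)\left(\frac{t^2}{t^2+\|\xi\|^2}\right)^{\!(k+1)/2},$$
which is a plain Gamma integral.

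Collecting the $\pi$- and $t$-powers coming from the Gaussian formula, the $1/\sqrt{\pi}$ from the subordination identity, and the Gamma factor, one obtains the claimed Poisson kernel together with the constant $c_k=\Gamma(\tfrac{k+1}{2})\pi^{-(k+1)/2}=\Gamma(\tfrac{k+1}{2})\sqrt{\pi}^{-k-1}$. I do not expect any serious analytic obstacle here: absolute convergence is transparent at each step and Fubini applies by nonnegativity. The main nuisance is arithmetic: one has to track the exponents of $\pi$, $t$ and $u$ carefully across the substitutions in order to land exactly on the normalization $c_k$ advertised. Should one prefer to sidestep the subordination identity, a natural alternative is to exploit the rotational symmetry, reduce the Fourier integral to a one-dimensional radial integral of Bessel type, and then evaluate it by a classical Bessel--Gamma identity; this trades the subordination step for a piece of special-function machinery but yields the same constant.
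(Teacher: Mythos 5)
Your proof is correct, and it is essentially the argument the paper points to: the paper gives no proof of its own but cites Chapter~1 of Stein--Weiss, where the formula is derived by exactly this subordination of $e^{-2\pi t\|x\|}$ to Gaussians, followed by the Gaussian Fourier transform and a Gamma integral, and your bookkeeping of the constants does land on $c_k=\Gamma(\tfrac{k+1}{2})\pi^{-(k+1)/2}$. The only cosmetic point is that the integrand containing $e^{-2\pi i\ls\xi|x\rs}$ is not itself nonnegative, so Fubini should be invoked via absolute integrability (the absolute value of the integrand is the nonnegative kernel whose double integral is $\int_{\mathbb{R}^k}e^{-2\pi t\|x\|}\,\dd x<\infty$), which is immediate.
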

For the proof we refer to~\cite[chapter~1]{SW}.
\begin{thm}[\cite{Sc2}]\label{theorem:Schoenberg2}
The euclidean metric on $\mathbb{R}^k$ is \CSND.
\end{thm}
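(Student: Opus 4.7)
The plan is to build the proof around the integral formula from \autoref{theorem:FTP}, which says that for each $t > 0$ the function $x \mapsto e^{-2\pi t\|x\|}$ on $\mathbb{R}^k$ is the Fourier transform of the strictly positive integrable density $\xi \mapsto tc_k/(t^2+\|\xi\|^2)^{(k+1)/2}$. By Bochner's theorem this makes $(x,y) \mapsto e^{-2\pi t\|x-y\|}$ a \PD\ kernel, and the idea is then to recover $\|\cdot\|$ from these exponentials via $\|x\| = \lim_{t \to 0^+} (1-e^{-2\pi t\|x\|})/(2\pi t)$, tracking the corresponding integral carefully enough to see that the resulting inequality is strict.

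Concretely, fix pairwise distinct $x_1,\ldots,x_n \in \mathbb{R}^k$ and real coefficients $\lambda_1,\ldots,\lambda_n$, not all zero, with $\sum_i \lambda_i = 0$. Applying Fourier inversion to \autoref{theorem:FTP} and summing gives
\begin{equation*}
\sum_{i,j} \lambda_i \lambda_j\, e^{-2\pi t\|x_i-x_j\|} = \int_{\mathbb{R}^k} \biggl|\sum_i \lambda_i e^{2\pi i\ls\xi|x_i\rs}\biggr|^2 \frac{tc_k}{(t^2+\|\xi\|^2)^{(k+1)/2}}\,\dd\xi.
\end{equation*}
Using $\sum_i \lambda_i = 0$, the left side equals $\sum_{i,j}\lambda_i\lambda_j(e^{-2\pi t\|x_i-x_j\|}-1)$, which after division by $-2\pi t$ tends termwise to $\sum_{i,j}\lambda_i\lambda_j\|x_i-x_j\|$ as $t \to 0^+$. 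On the right side the density $c_k/\bigl(2\pi(t^2+\|\xi\|^2)^{(k+1)/2}\bigr)$ increases monotonically to $c_k/(2\pi\|\xi\|^{k+1})$, so monotone convergence yields
\begin{equation*}
-\sum_{i,j} \lambda_i\lambda_j\|x_i-x_j\| = \int_{\mathbb{R}^k} \biggl|\sum_i \lambda_i e^{2\pi i\ls\xi|x_i\rs}\biggr|^2 \frac{c_k}{2\pi\|\xi\|^{k+1}}\,\dd\xi.
\end{equation*}

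To finish I need the integral on the right to be strictly positive. Set $F(\xi) = \sum_i \lambda_i e^{2\pi i\ls\xi|x_i\rs}$. Since the $x_i$ are distinct and the $\lambda_i$ are not all zero, the uniqueness of the Fourier transform of the atomic measure $\sum_i \lambda_i\delta_{x_i}$ forces $F \not\equiv 0$; hence $|F|^2$ is a nonzero nonnegative continuous function, strictly positive on a nonempty open set, and the integral is positive.

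The delicate step is the convergence of the right-hand integral after the limit is taken, and it is precisely the balance condition $\sum_i \lambda_i = 0$ that saves the day: the identity $F(0) = \sum_i\lambda_i = 0$ forces $|F(\xi)|^2 = O(\|\xi\|^2)$ near the origin, so $|F|^2/\|\xi\|^{k+1} = O(\|\xi\|^{1-k})$ is integrable there, while the decay of $1/\|\xi\|^{k+1}$ at infinity handles the tail. The very condition defining conditional negative definiteness is thus exactly what is needed to make the Fourier-side integral convergent, and this symmetry between the two sides of the identity is what makes the argument go through cleanly.
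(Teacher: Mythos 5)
Your proposal is correct and follows essentially the same route as the paper: approximate $\|x\|$ by $(1-e^{-2\pi t\|x\|})/(2\pi t)$, pass to the Fourier side via the formula of \autoref{theorem:FTP}, take the limit $t\to0^+$ by monotone convergence, and conclude strict positivity from the fact that $\sum_i\lambda_i e^{2\pi i\ls\xi|x_i\rs}$ cannot vanish identically. Your explicit check of integrability of $|F|^2/\|\xi\|^{k+1}$ near the origin (using $F(0)=0$) and your appeal to uniqueness of the Fourier transform of atomic measures in place of the paper's Vandermonde-type linear-independence argument are only cosmetic differences.
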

\begin{proof}
Observe first, that $\lim\limits_{n\to\infty}n(e^{-\frac{1}{n}\|x\|}-1)=-\|x\|$. Fix a finite set of points  $X\subseteq\mathbb{R}^k$ and a non-zero sequence of coefficients $\lambda\colon X\to\mathbb{C}$ such that $\sum\limits_{x\in X}\lambda(x)=0$. We have:
\begin{dmath*}
-\sum\limits_{x,y\in X}\!\!\!\lambda(x)\overline{\lambda(y)}\|x-y\|=
\lim_{n\to\infty}n\sum\limits_{x,y\in X}\lambda(x)\overline{\lambda(y)}(e^{-\frac{1}{n}\|x-y\|}-1)=
\lim_{n\to\infty}n\sum\limits_{x,y\in X}\lambda(x)\overline{\lambda(y)}e^{-\frac{1}{n}\|x-y\|} 
\end{dmath*}
where we use that $\sum\limits_{x\in X}\lambda(x)=0$ in the last equality. Write the inverse Fourier transform (with $t=\frac{1}{2n\pi}$) as in the previous theorem. We continue with:
\begin{dmath*}
 =\lim_{t\to0}n\sum\limits_{x,y\in X}\lambda(x)\overline{\lambda(y)}\int_{\mathbb{R}^k}\frac{t\cdot c_k}{(t^2+\|\xi\|^2)^{\frac{k+1}{2}}}e^{2\pi i\ls\xi|x-y\rs}\dd \xi
 \end{dmath*} \begin{dmath*}
 =\frac{c_k}{2\pi}\lim_{t\to0}\int_{\mathbb{R}^k}\frac{\sum\limits_{x,y\in X}\lambda(x)\overline{\lambda(y)}}{(t^2+\|\xi\|^2)^{\frac{k+1}{2}}}e^{2\pi i\ls\xi|x\rs}\overline{e^{2\pi i\ls\xi|y\rs}}\dd \xi
 =\frac{c_k}{2\pi}\lim_{t\to0}\int_{\mathbb{R}^k}\bigg|\sum\limits_{x\in X}\lambda(x)e^{2\pi i\ls\xi|x\rs}\bigg|^2\frac{\dd \xi}{(t^2+\|\xi\|^2)^{\frac{k+1}{2}}}
 =\frac{c_k}{2\pi}\int_{\mathbb{R}^k}\bigg|\sum\limits_{x\in X}\lambda(x)e^{2\pi i\ls\xi|x\rs}\bigg|^2\frac{\dd \xi}{\|\xi\|^{k+1}}
 \end{dmath*}
the last equality is due to Lebesgue monotone convergence theorem. Observe that the expression under integral is non-zero, since the functions $e^{2\pi i\ls\cdot|x\rs}$ are linearly independent (in one dimensional case it is a well known fact thanks to the Vandermonde determinant; the general case may be reduced to the one-dimensional case: because $X\subseteq\mathbb{R}^k$ is finite, one may find a line $\mathbb{R}\mathbf{v}$ intersecting \[\bigcup\limits_{\substack{x,y\in X,\\ x\neq y}}\{x-y\}^{\perp}\neq\mathbb{R}^k\] only in $\mathbf{0}\in\mathbb{R}^k$), so the integral is strictly positive.
\end{proof}
\begin{rmk}
 This type of reasoning is classical in harmonic analysis and a similar result could be obtained a different method, cf. \cite{Bal, Bax, Mic, MS, Sc1, Sc2}. We owe the main idea of the proof to M. Bożejko.
\end{rmk}
A metric space $(X,d)$ is called an $\mathbb{R}$-tree, if any two points $x,y\in X$ can be connected by an unique arc (topological embedding of a closed interval) and this arc is a geodesic segment (i.e. the embedding of a closed interval is isometric). A reasonable reference for the theory and applications of $\mathbb{R}$-trees is \cite{MB}. A notion of $\mathbb{R}$-graph is not well established in the literature, altough the general idea is rather folklore: it should locally look like an $\mathbb{R}$-tree, but there might be some $S^1$ embedded into it. There are several ways to formalize of this idea, but for our purposes it will be enough to consider the case where we do not allow circles of arbitrary small length in a neighbourhood of a single point. We call a metric space $(X,d)$ an $\mathbb{R}$-graph if for any point $x\in X$ there exists an $\varepsilon>0$ such that the ball $B(x,\varepsilon)$ with the restricted metric is an $\mathbb{R}$-tree. 
\begin{appcor}\label{cor:app}
A metric on $\mathbb{R}$-graph $X$ is conditionally strictly negative definite if and only if $X$ is an $\mathbb{R}$-tree.
\end{appcor}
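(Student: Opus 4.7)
The plan is to handle the two implications separately. For the implication ($\mathbb{R}$-tree $\Rightarrow$ metric is \CSND), the strategy is to use the natural quadratic embedding into a Hilbert space and appeal to \autoref{lemma:keylem}\emph{(b)}. Fix a basepoint $o \in X$ and let $\mu$ be the $1$-dimensional Hausdorff measure on $X$. Define $\alpha\colon X \to L^2(X,\mu)$ by $\alpha(x) = \chi_{[o,x]}$, where $[o,x]$ is the unique geodesic arc. Since any triple $o,x,y$ in an $\mathbb{R}$-tree has a median $m$, the symmetric difference $[o,x]\triangle[o,y]$ equals $[m,x]\sqcup[m,y]=[x,y]$, giving $\|\alpha(x)-\alpha(y)\|^2 = d(x,y)$. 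To invoke \autoref{lemma:keylem}, I need affine (in fact, linear) independence of $\{\alpha(x_1),\ldots,\alpha(x_n)\}$ for any finite set of distinct points. This I would prove by induction on the Steiner tree $T$ of $\{o,x_1,\ldots,x_n\}$: choose a leaf $x_L$ of $T$; since $x_L$ is a leaf, the edge in $T$ adjacent to $x_L$ lies in $[o,x_L]$ but in no $[o,x_j]$ for $j\neq L$, so any vanishing combination $\sum\lambda_j\alpha(x_j)=0$ forces $\lambda_L=0$; then remove $x_L$ and iterate.

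For the implication (metric is \CSND\ $\Rightarrow$ $X$ is an $\mathbb{R}$-tree), I will produce the classical four-point obstruction used in \autoref{lemma:obst}. If $X$ is an $\mathbb{R}$-graph but not an $\mathbb{R}$-tree, then some pair of points is joined by two distinct arcs, so $X$ contains a simple closed curve. The local tree property guarantees that any simple closed curve passing through a point $x$ has length at least $2\varepsilon(x)$, where $\varepsilon(x)$ is the local tree radius; a variational/Arzelà--Ascoli argument then yields a shortest embedded loop $C$ of some length $L>0$. By the minimality of $L$, the loop $C$ is isometrically embedded in $X$: any chord realizing a shorter $X$-distance between two points on $C$ would combine with an arc of $C$ to produce a strictly shorter simple closed curve. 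Now choose four points on $C$ equally spaced at arc-length $L/4$; their distance matrix has exactly the shape appearing in the proof of \autoref{lemma:obst}, with $(1,-1,1,-1)^\top$ in its kernel, and \autoref{lemma:keylem}\emph{(a)} concludes that the metric is not \CSND.

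The main obstacle lies in the second direction, specifically in producing a loop that is both simple and isometrically embedded. The $\mathbb{R}$-graph hypothesis provides no global compactness, so justifying the existence of a length-minimizing simple closed curve requires either localizing to a suitable relatively compact neighborhood containing a given loop, or an $\varepsilon$-minimal loop argument followed by a limiting procedure exploiting the local tree radius as a positive lower bound on the length. A minor technical nuisance in the first direction is the case $o \in \{x_1,\ldots,x_n\}$, where $\alpha(o)=0$: then one applies the Steiner-tree leaf induction to $\{x_1,\ldots,x_n\}\setminus\{o\}$, and the affine constraint $\sum\lambda_i=0$ kills the remaining coefficient.
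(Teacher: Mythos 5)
Your second direction coincides with the paper's: locate an isometrically embedded circle and defeat \CSND\ with four points whose distance matrix has $(1,-1,1,-1)^\top$ in its kernel, i.e.\ exactly the $k,n\in\mathbb{R}_{>0}$ analogue of \autoref{lemma:obst} that the paper invokes. Your first direction, however, takes a genuinely different route. You build the explicit quadratic embedding $x\mapsto\chi_{[o,x]}\in L^2(X,\mu)$ and prove affine independence of the image by a Steiner-tree leaf induction (which is correct: a leaf $x_L\neq o$ of the spanned finite subtree owns a terminal segment of positive measure met by no other $[o,x_j]$, forcing $\lambda_L=0$; and your handling of $\alpha(o)=0$ via the constraint $\sum_i\lambda_i=0$ is exactly condition \emph{(c)} of \autoref{lemma:affind}). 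The paper instead observes that finitely many points span a finite subtree, realizes that subtree as an iterated wedge sum of real intervals, and concludes from \autoref{lemma:ms} together with \autoref{theorem:Schoenberg2} in dimension one. Your argument is self-contained and makes the source of strictness visible; the paper's recycles two results it has already proved and requires no new computation. Both are valid.

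The one step you have genuinely not closed is the one you flag yourself: extracting an isometrically embedded simple closed curve from the failure of the $\mathbb{R}$-tree property. Beyond the compactness issue you mention, your chord-surgery argument has a second defect: concatenating a shortcut $\gamma$ with an arc of $C$ produces a shorter \emph{closed curve}, but not obviously a shorter \emph{simple} one, since $\gamma$ may meet $C$ many times; extracting an embedded loop of positive length from that concatenation needs its own argument (finitely many tree-balls cover the compact union, so it is a finite topological graph, from which an embedded cycle can be pulled out --- but this must be said). The paper sidesteps the minimization entirely with the (terse) observation that two distinct geodesic arcs joining a single pair of points already constitute an isometrically embedded circle, so no extremal loop is needed. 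If you keep your scheme, localize the minimization to a compact neighbourhood of a given loop and use the local tree radius as a uniform positive lower bound on loop lengths; otherwise, replacing the minimization by the two-geodesics observation is the shorter path to the same four-point obstruction.
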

\begin{proof}
We begin with the observation, that an $\mathbb{R}$-graph $X$ is an $\mathbb{R}$-tree if and only if there is no isometrically embedded $S^1$. Indeed, if there is one, then there are several arcs joining the antipodal points of the circle. If there are multiple arcs joining some points $x,y$ and they are geodesic segments, they constitute an isometrically embedded circle. Now, if there is an isometrically embedded circle, an analogue of \autoref{lemma:obst} (for $k,n\in\mathbb{R}_{>0}$) gives us implication in one direction. To prove the converse it is enough to observe that for given finite set of points in $X$, they lie on finitely many branches of the $\mathbb{R}$-tree, so we conclude by recalling the wedge sum \autoref{lemma:ms} and Schoenberg's \autoref{theorem:Schoenberg2}.
\end{proof}
The above corollary recovers, in particular, a well-known result of M. Bożejko (\cite{Bo1}) that the metric on $\mathbb{R}$-tree is conditionally negative definite. Of course, a similar way one can produce a list of less obvious examples, like $\fp\limits_{i\in I}\mathbb{R}^{k_i}$. Although they are all $\mathsf{CAT}(0)$ spaces, so they do not give new examples of spaces on which a group could act to have the Haagerup approximation property. On the other hand, a plane with jungle river metric or SNCF metric are examples that might not come to mind quickly.

\end{document}